\def\squarebox#1{\hbox to #1{\hfill\vbox to #1{\vfill}}}
\newtheorem{Thm}{Theorem}[section]
 \newtheorem{cor}{Corollary}[section]
\newtheorem{Def}{Definition}[section]
\newtheorem{lem}{Lemma}[section]
\numberwithin{equation}{section}
\newcommand{\bel}{\begin{equation} \label}
\newcommand{\ee}{\end{equation}}
\newcommand{\re}{\mathfrak R}
\newcommand{\im}{\mathfrak I}
\newcommand{\R}{\mathbb{R}}
\def\epsilon{\varepsilon}
\def\phi {\varphi}
\newtheorem{rem}{Remark}[section]
\newtheorem{prop}{Proposition}[section]
\providecommand{\abs}[1]{\left\lvert#1\right\rvert}
\providecommand{\norm}[1]{\left\lVert#1\right\rVert}
\numberwithin{equation}{section}
\renewcommand{\leq}{\leqslant}
\renewcommand{\geq}{\geqslant}
\providecommand{\abs}[1]{\left\lvert#1\right\rvert}
\providecommand{\norm}[1]{\left\lVert#1\right\rVert}
\def\beq{\begin{equation}}
\def\eeq{\end{equation}}
\newcommand{\bea}{\begin{eqnarray}}
\newcommand{\eea}{\end{eqnarray}}
\newcommand{\beas}{\begin{eqnarray*}}
\newcommand{\eeas}{\end{eqnarray*}}
\title[Determination of the sound speed and an initial source]{Determination of the sound speed
and an initial source in photoacoustic tomography}
\author{Yavar Kian}
\address{Aix Marseille Univ, Universit\'e de Toulon, CNRS, CPT, Marseille, France.}
\email{yavar.kian@univ-amu.fr}
\author{Gunther Uhlmann}
\address{G. Uhlmann, Department of Mathematics\\
       University of Washington\\
       Seattle, WA  98195-4350\\
       USA}
\email{gunther@math.washington.edu}
\begin{document}

\maketitle

\begin{abstract} In thermoacoustic and photoacoustic tomography an electromagnetic wave is sent through a medium, heating it and therefore generating an elastic expansion that in turns generates an acoustic wave that is measured outside the medium. The general problem is to recover both the inhomogeneous sound speed and the initial pressure from the boundary measurements of the solution of the acoustic wave equation  with a single measurement of the pressure. We show that one can recover both the sound speed and the initial pressure assuming a monocitiy condition on the sound speed that includes the physically interesting case of  piecewise constant sound speeds.  We also establish a link between this problem and the corresponding transmission eigenvalue problem.\\

\medskip
\noindent
{\bf  Keywords:} Inverse problems, wave equation, photoacoustic tomography, uniqueness, transmission eigenvalues problem.\\

\medskip
\noindent
{\bf Mathematics subject classification 2010 :} 35R30, 	35L05.
\end{abstract}

\section{Introduction}
\label{sec-intro}
\setcounter{equation}{0}

In photoacoustic and thermoacoustic tomography we probe a medium with an electromagnetic wave. This excites the medium and produces heat generating an elastic expansion which in turn generates a corresponding sound wave that is measured at the boundary of the medium. In photoacoustic tomography (PAT) this is done with a rapidly pulsating laser beam; in thermoacoustic tomography (TAT) the medium is probed with an electromagnetic wave of a lower frequency (see e.g. \cite{KRK,W}). This imaging method is also referred to as the ``sound of light". In many practical situations the sound speed inside the medium is  unknown and in order to determine this sound speed additional measurement have been considered in \cite{JW}. This is modeled as an inverse problem for the acoustic wave equation,

\begin{equation}\label{intro-eqn}\left\{\begin{array}{ll}c^{-2}(x)\partial_t^2u+\Delta u=0,\quad &\textrm{in}\ \R_+\times\R^3,\\  u(0,x)=f(x),\quad \partial_tu(0,x)=0,&x\in\R^3,\end{array}\right.\end{equation}
with $f\in H^1(\R^3)$ compactly supported and $c\in L^\infty(\R^3)$ taking a fix  constant value outside a compact set and satisfying the following condition
 \begin{equation}\label{c}c(x)\geq r_1>0,\quad x\in\R^3,\end{equation}
where the initial condition $f$ models the initial pressure and $c$ is the sound speed of the medium (see e.g. \cite{DSK,T}). The problem of recovering the initial pressure if the sound speed is known has been intensively studied and without being exhaustive one can refer to \cite{AKK,SU1,SU2,SU4} (see also \cite{HK} for similar results for Lam\'e systems). Nevertheless, there have been only few works devoted to the simultaneous determination of the sound speed coefficient $c$ and the initial pressure $f$ which is proved to be an unstable inverse problem \cite{SU3}. For the simultaneous determination of the sound speed coefficient $c$ and the initial pressure $f$ we are only aware of the works  \cite{KM,LU}. In \cite{LU} the authors proved the unique determination of $c^{-2}f$ when the unknown part of this function is harmonic or independent of at least one spatial variable. When the sound speed coefficient $c$ is constant and $f$ non-negative \cite{LU}  proved the unique simultaneous determination of $c$  and $f$. The work of \cite{LU} has been extended by \cite{KM} who have proved the unique simultaneous determination of $c$  and $f$ provided that the condition
\bel{kmm}\int_{\R^3}c^{-2}fdx\neq0\ee
is fulfilled and the unknown part of $c^{-2}$ is harmonic. As far as we know the simultaneous determination of the sound speed coefficient $c$ and the initial pressure $f$ when $c$ is not constant or condition \eqref{kmm} is not fulfilled remain an open problem and the goal of this article is to study this problem.

\section{Statement of the results}
Now we consider a precise mathematical formulation of the problem when the Laplacian $\Delta$ in \eqref{intro-eqn} is replaced by a general second order operator $\mathcal{A}$ in divergence form. Let $a:=(a_{i,j})_{1 \leq i,j \leq 3} \in C^1(\R^3;\R^{3^2})$, be symmetric, i.e. such that 
$$ a_{i,j}(x)=a_{j,i}(x),\ x \in \Omega,\ i,j = 1,2,3, $$
and satisfying the ellipticity condition
\bel{a1}
\exists b>0,\ \sum_{i,j=1}^3 a_{i,j}(x) \xi_i \xi_j \geq b |\xi|^2,\ x \in \R^3,\ \xi=(\xi_1,\xi_2,\xi_3) \in \R^3.
\ee
We assume also that there exists $R_1>0$ such that
\bel{a2} a_{i,j}(x)=\delta_{ij},\quad x\in\R^3,\ |x|>R_1,\ i,j=1,2,3,\ee
where $\delta_{ij}$ denotes the Kronecker delta symbol. We define the operator $\mathcal A$ by
$$ 
\mathcal A v(x) :=-\sum_{i,j=1}^3 \partial_{x_i} 
\left( a_{i,j}(x) \partial_{x_j} v(x) \right),\  x\in\R^3. 
$$ 
Then, we consider the initial value problem (IVP)

\begin{equation}\label{eq1}\left\{\begin{array}{ll}c^{-2}(x)\partial_t^2u+\mathcal A u=0,\quad &\textrm{in}\ \R_+\times\R^3,\\  u(0,x)=f(x),\quad \partial_tu(0,x)=0,&x\in\R^3,\end{array}\right.\end{equation}
with $f\in H^1(\R^3)$ compactly supported and $c\in L^\infty(\R^3)$ taking a fix  constant value outside a compact set and satisfying \eqref{c}.

We fix $\Omega$ a Lipschitz bounded and  connected domain  of $\R^3$ such that supp$(f)\subset\overline{\Omega}$ and with $c$ constant on $\R^3\setminus\overline{\Omega}$. We study the inverse problem of determining simultaneously the sound speed coefficient $c$ and the initial pressure $f$ from the knowledge of $u(t,x)$, $(t,x)\in\R_+\times\partial\Omega$.

In order to state our  main results dealing with the simultaneous determination of the sound speed coefficient $c$ and the initial pressure $f$, we first need to define some class of admissible sound speed coefficients. For this purpose, let us consider $c\in L^\infty(\R^3)$ satisfying  condition \eqref{c} and  such that $c$ is constant on the set $\R^3\setminus \overline{\Omega}$. We fix also $R_0>0$ such that  $\overline{\Omega}\subset B_{R_0}:=\{x\in\R^3:|x|<R_0\}$ and a constant $c_0>0$ such that $c=c_0$ on $\R^3\setminus\overline{\Omega}$. Then, we define the class of admissible sound speed coefficients $c$ as follows.
\begin{Def}\label{d1} The sound speed coefficient $c$ will be called admissible if condition \eqref{c} is fulfilled and if for any $f\in H^1(\R^3)$, $g\in L^2(\R^3)$ with supp$(f)\cup$supp$(g)\subset\overline{\Omega}$, and any $R\geq R_0$, there exists a constant $C>0$ and a constant $\delta>0$ depending on $R$, $a$, $c$ and $\Omega$ such that the solution $w$ of the IVP
\bel{tata}\left\{\begin{array}{ll}c^{-2}(x)\partial_t^2w+\mathcal A w=0,\quad &\textrm{in}\ \R_+\times\R^3,\\  w(0,x)=f(x),\quad \partial_tw(0,x)=g(x),&x\in\R^3\end{array}\right.\ee
satisfies the estimate
\begin{equation}\label{d1a}\norm{(w(t,\cdot),\partial_tw(t,\cdot))}_{H^1(B_{R})\times L^2(B_{R})}\leq Ce^{-\delta t}(\norm{f}_{H^1(\R^3)}+\norm{g}_{L^2(\R^3)}),\quad t>0.\end{equation}
\end{Def}

Recall that estimate \eqref{d1a} is called the local energy decay for the wave equation and, for smooth sound speed coefficients $c$ and $a$, it is known to be a consequence of the non-trapping condition imposed on $c$ and $a$ (see e.g. \cite{Va}) considered also in \cite{LU}.  We mention also the work of \cite{Vo2} where an equivalent condition to \eqref{d1a} has been obtained for smooth coefficients $c$ and $a$ by applying the analysis of \cite{Vo1}.

Our first main result can be stated as follows.
\begin{Thm}\label{t3} 
For $j=1,2$, let $f_j\in H^1(\R^3)$ be non-uniformly  vanishing and $c_j\in L^\infty(\R^3)$ be  an admissible sound speed coefficient, such that supp$(f_j)\cup$supp$(c_1-c_2)\subset\overline{\Omega}$ and $c_j$ is constant on  $\R^3\setminus\overline{\Omega}$. Let $u_j$ be the solution of \eqref{eq1} with $c=c_j$ and $f=f_j$. Consider $K_1,\ldots,K_N$, $N$ compact  sets, with no-empty interior, included in $\overline{\Omega}$ 
such that 
\begin{equation}\label{t3a}  K_i\cap K_j=\emptyset,\quad i\neq j,\end{equation}
\begin{equation}\label{t3b} \mathcal O =B_{R_0}\setminus\left(\bigcup_{j=1}^N K_j\right)\ \ \textrm{is connected}.\end{equation}
 Now assume that $c_1-c_2\in C(\overline{\Omega})$ and that  the following conditions
\begin{equation}\label{t3c}  c_1(x)= c_2(x),\quad x\in\mathcal O,\end{equation}
\begin{equation}\label{t3d} \forall j=1,\ldots,N,\quad  c_1|_{K_j}\leq c_2|_{K_j}\quad\textrm{or} \quad c_2|_{K_j}\leq c_1|_{K_j},\end{equation}
are fulfilled.
Then  the  implication \eqref{t1c} holds true.
\end{Thm}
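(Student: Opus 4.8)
The plan is to turn the single hyperbolic measurement into a holomorphic family of stationary problems, to upgrade the Dirichlet data on $\partial\Omega$ to full Cauchy data, and then to separate the determination of $f$ on the common region $\mathcal O$ from that of $c$ on the sets $K_j$, the latter being forced by the monotonicity \eqref{t3d}. First I would Laplace transform \eqref{eq1} in time: the admissibility of $c_j$, i.e.\ the local energy decay \eqref{d1a}, guarantees that $u_j(t,\cdot)$ decays like $e^{-\delta t}$ in $H^1(B_R)$, so that $U_j(p,x):=\int_0^\infty e^{-pt}u_j(t,x)\,dt$ is well defined and holomorphic in $p$ on $\mathrm{Re}\,p>-\delta$, with values in $H^1_{loc}(\R^3)$. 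Using $\partial_tu_j(0)=0$ one checks that $U_j$ solves the stationary equation $\mathcal A U_j+p^2c_j^{-2}U_j=p\,c_j^{-2}f_j$ on $\R^3$, while the equality of the boundary measurements gives $U_1=U_2$ on $\partial\Omega$ for every such $p$.

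Next I would exploit the exterior region. Since $\mathrm{supp}(c_1-c_2)\subset\overline\Omega$ and $a$ is common to both problems, on $\R^3\setminus\overline\Omega$ both $U_1$ and $U_2$ satisfy the same equation $\mathcal A U_i+p^2c_0^{-2}U_i=0$, with the same Dirichlet data on $\partial\Omega$ and with decay at spatial infinity. For $\mathrm{Re}\,p$ large the exterior operator $\mathcal A+p^2c_0^{-2}$ is coercive, so its decaying Dirichlet solution is unique; elliptic unique continuation (valid since $a\in C^1$) then propagates the equality through the region where $a\neq\mathrm{Id}$, giving $U_1\equiv U_2$ on $\R^3\setminus\overline\Omega$. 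Matching the Dirichlet and conormal traces on $\partial\Omega$ and inverting the Laplace transform, I obtain $u_1=u_2$ and $a\nabla u_1\cdot\nu=a\nabla u_2\cdot\nu$ on $\R_+\times\partial\Omega$.

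To determine $f$ on $\mathcal O$ I would return to the time domain and use unique continuation. Extending $u_j$ evenly in $t$ (legitimate because $\partial_tu_j(0)=0$), the difference $v=u_1-u_2$ solves the homogeneous equation $c^{-2}\partial_t^2v+\mathcal A v=0$ on $\R\times\mathcal O$ --- here \eqref{t3c} gives $c_1=c_2=:c$ on $\mathcal O$ --- with vanishing Cauchy data on $\R\times\partial\Omega$. As the coefficients are independent of $t$ they are in particular analytic in $t$, so the sharp unique continuation theorem of Tataru--Robbiano--Zuily--H\"ormander applies across the non-characteristic surface $\R\times\partial\Omega$; the connectedness \eqref{t3b} of $\mathcal O$ together with finite speed of propagation then let the vanishing of $v$ fill the whole two-sided cone over $\mathcal O$ for a large enough observation time. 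Evaluating at $t=0$ yields $f_1-f_2=v(0,\cdot)=0$ on $\mathcal O$ and $u_1\equiv u_2$ on $\R\times\mathcal O$; in particular $U_1$ and $U_2$ share the same Cauchy data on each $\partial K_j$.

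It remains to treat the sets $K_j$, and this is where I expect the real difficulty and where the monotonicity \eqref{t3d} is essential. On a fixed $K_j$ the transformed solutions satisfy $\mathcal A U_i+p^2c_i^{-2}U_i=p\,c_i^{-2}f_i$ $(i=1,2)$ with identical Cauchy data on $\partial K_j$, that is, an inhomogeneous interior transmission problem. The role of \eqref{t3d} is that $c_1^{-2}-c_2^{-2}$ keeps a constant sign on $K_j$, which is exactly the coercivity condition under which the associated interior transmission eigenvalues form a discrete set. Confronting this discreteness with the holomorphic dependence on $p$ --- a continuum of parameters cannot consist of eigenvalues --- and using that $f_j$ is non-uniformly vanishing to exclude the trivial solution, I would conclude that the transmission problem is compatible for all $p$ only if $c_1=c_2$ on $K_j$; subtracting the two equations then gives $f_1=f_2$ there as well. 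Running this over all $j$ yields $c_1=c_2$ and $f_1=f_2$ on $\R^3$, which is the implication \eqref{t1c}. The main obstacle is precisely this last step: converting the sign and discreteness information provided by the monotonicity into the pointwise equality $c_1=c_2$ in the presence of the non-zero sources $c_i^{-2}f_i$, uniformly in the holomorphic parameter $p$.
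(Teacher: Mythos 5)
Your opening moves (Laplace transform via the decay \eqref{d1a}, exterior uniqueness, recovery of the Cauchy data on $\partial\Omega$) match the paper's, but the two steps that carry the actual content both have genuine gaps. First, to get $u_1=u_2$ on $\R\times\mathcal O$ you invoke Tataru--Robbiano--Zuily--H\"ormander unique continuation across $\R\times\partial\Omega$. That theorem needs the spatial coefficients of the \emph{principal} part to be at least Lipschitz/$C^1$; here the common sound speed on $\mathcal O$ is only $L^\infty$ (the hypothesis is $c_1-c_2\in C(\overline{\Omega})$, not $c_j\in C(\overline{\Omega})$), and $c$ sits in the principal symbol of $c^{-2}\partial_t^2+\mathcal A$. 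Unique continuation for second-order operators with merely bounded measurable leading coefficients fails in general, and the paper flags exactly this obstruction in the appendix: it is the stated reason Theorem~\ref{t2} is proved by spectral decomposition rather than by Tataru's theorem. The paper's proof of Theorem~\ref{t3} never uses hyperbolic unique continuation at all: it works with the time-moments $u^{(k)}=(-1)^k\int_0^{+\infty}t^k u(t,\cdot)dt/k!$, which by Lemma~\ref{l1} solve \emph{elliptic} equations whose principal operator $\mathcal A$ has $C^1$ coefficients ($c$ enters only the right-hand sides), so only elliptic unique continuation is ever needed.

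Second, and more seriously, your treatment of the sets $K_j$ --- the step you yourself identify as the main obstacle --- is not an argument that can be completed in the form sketched. (a) Discreteness of interior transmission eigenvalues requires a nondegenerate contrast near $\partial K_j$, whereas here $c_1-c_2$ is continuous and vanishes on $\partial K_j$ by \eqref{t3c}, which is precisely the degenerate case excluded by that theory. (b) The problem on $K_j$ is inhomogeneous, with two unknown and a priori \emph{different} sources $f_1,f_2$ there (you only equated them on $\mathcal O$), so solvability of the compatible Cauchy-data problem for every $p$ contradicts nothing. (c) The paper's Theorem~\ref{t4} shows the situation is the opposite of your intuition: under the data equality, it is the set of \emph{non}-eigenvalues in a strip that is discrete, and the remark following it states that extracting information on $c_1-c_2$ from transmission-eigenvalue properties is the open ``next step.'' What the paper does instead is elementary: with $k_0=\min E$, the function $u_1^{(2k_0)}$ is $\mathcal A$-harmonic in $B_{R_0}$, the difference $u^{(2k_0+2)}=u_1^{(2k_0+2)}-u_2^{(2k_0+2)}$ vanishes on $\mathcal O$ by elliptic unique continuation (using \eqref{t3c} and the connectedness \eqref{t3b}), and multiplying the elliptic equation \eqref{l2e} with $k=2k_0+2$ by $\chi_j\overline{u_1^{(2k_0)}}$, where $\chi_j$ is a cutoff equal to $1$ near $K_j$ with $\nabla\chi_j$ supported in $\mathcal O$, gives after integration by parts
\begin{equation*}
\int_{K_j}\left(c_1^{-2}-c_2^{-2}\right)\left|u_1^{(2k_0)}\right|^2dx=0 .
\end{equation*}
The local monotonicity \eqref{t3d} then forces $(c_1-c_2)|u_1^{(2k_0)}|^2\equiv 0$ on $K_j$, and continuity of $c_1-c_2$ together with unique continuation for $\mathcal A$ yields $c_1=c_2$ on $K_j$; finally $f_1=f_2$ follows from Theorem~\ref{t2}, not from any unique continuation in time. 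So the sign condition \eqref{t3d} is exploited through a quadratic-form identity against $\mathcal A$-harmonic moments, not through transmission-eigenvalue discreteness, and your proposal is missing precisely that mechanism.
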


As a corollary of this main result we consider a global monotonicity condition \eqref{t1b} in lieu of the local one used above. This corollary can be stated as follows.

\begin{cor}\label{t1} 
For $j=1,2$, let $f_j\in H^1(\R^3)$ be non-uniformly vanishing and let $c_j\in L^\infty(\R^3)$ be  an admissible sound speed coefficient, such that supp$(f_j)\cup$supp$(c_1-c_2)\subset\overline{\Omega}$ and $c_j$ is constant on  $\R^3\setminus\overline{\Omega}$. Let $u_j$ be the solution of \eqref{eq1} with $c=c_j$ and $f=f_j$. Assume that $c_1-c_2\in C(\overline{\Omega})$ and that  the following condition
\begin{equation}\label{t1b}  c_1\leq c_2\quad\textrm{or} \quad c_2\leq c_1\end{equation}
is fulfilled.
Then  the following implication
\begin{equation}\label{t1c}  (u_1= u_2 \textrm{ on }\R_+\times\partial\Omega)\Rightarrow (c_1=c_2 \textrm{ and }f_1=f_2)\end{equation}
holds true.
\end{cor}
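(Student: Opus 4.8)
The plan is to obtain Corollary~\ref{t1} as a direct specialisation of Theorem~\ref{t3}: all the hypotheses on $f_j$ and $c_j$ (non-uniform vanishing, admissibility, constancy outside $\overline\Omega$, the support conditions and $c_1-c_2\in C(\overline\Omega)$) are shared by the two statements, so everything reduces to producing compact sets $K_1,\dots,K_N\subset\overline\Omega$ with nonempty interior for which \eqref{t3a}--\eqref{t3d} hold; once they are in hand, Theorem~\ref{t3} gives \eqref{t1c} verbatim. Using the dichotomy in \eqref{t1b}, I would assume throughout that $c_1\leq c_2$, the case $c_2\leq c_1$ being identical.

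First I would dispose of the degenerate case. If $c_1=c_2$ on $\overline\Omega$ (equivalently, as $L^\infty$ coefficients, $c_1=c_2$ everywhere), I take $N=1$ and let $K_1$ be any closed ball contained in $\Omega$: it is compact with nonempty interior, $B_{R_0}\setminus K_1$ is connected, and \eqref{t3c}--\eqref{t3d} hold with equality, so Theorem~\ref{t3} already yields $c_1=c_2$ and $f_1=f_2$. Otherwise $c_1-c_2$ is continuous and not identically zero on $\overline\Omega$; it cannot vanish throughout $\Omega$, for then it would vanish on $\overline\Omega$ by continuity, so the open set $V:=\{x\in\Omega:\ c_1(x)\neq c_2(x)\}$ is nonempty. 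Its closure $\overline V$ is contained in $\overline\Omega\subset B_{R_0}$, hence compact and separated from $\partial B_{R_0}$ by a positive distance; fix $\varepsilon>0$ with $\overline V\subset B_{R_0-\varepsilon}$.

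The naive choice $K_1=\overline V$ satisfies every requirement of Theorem~\ref{t3} except possibly connectedness of the complement, so the crux is to arrange \eqref{t3b}. I would let $\mathcal O_0$ be the connected component of the open set $B_{R_0}\setminus\overline V$ that contains the shell $\{R_0-\varepsilon<|x|<R_0\}$, and set $K_1:=B_{R_0}\setminus\mathcal O_0$. By construction $\mathcal O_0$ is connected and open, $K_1$ is closed in $B_{R_0}$ and bounded away from $\partial B_{R_0}$, hence compact; $K_1\supseteq V$ gives nonempty interior; and $\mathcal O_0\subset B_{R_0}\setminus V$ forces $c_1=c_2$ on $\mathcal O_0=\mathcal O$, which is \eqref{t3c}. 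The region $K_1\setminus\overline V$ consists of bounded components of $B_{R_0}\setminus\overline V$, on each of which $c_1=c_2$; together with \eqref{t1b} this gives $c_1\leq c_2$ on all of $K_1$, i.e. \eqref{t3d}, while \eqref{t3a} is vacuous for $N=1$. Theorem~\ref{t3} then applies.

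I expect the one real difficulty to be this topological hole-filling. When $\Omega$ is not simply connected a bounded component of $B_{R_0}\setminus\overline V$ may lie partly in $B_{R_0}\setminus\overline\Omega$, so that forcing a connected complement can push $K_1$ outside $\overline\Omega$; because $c_1=c_2$ there, \eqref{t3d} is unaffected, and the inclusion ``$K_1\subset\overline\Omega$'' in Theorem~\ref{t3} may be read as ``$K_1\subset B_{R_0}$'', its proof using only the agreement of the speeds on $\mathcal O$ and the one-sided monotonicity on the $K_j$. Modulo this bookkeeping the deduction is purely set-theoretic and requires no new analytic input.
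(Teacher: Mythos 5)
Your proposal is correct in substance, but it is not the paper's argument --- in fact the paper's logic runs in the opposite direction. Although Corollary \ref{t1} is advertised as a consequence of Theorem \ref{t3}, the paper proves the corollary first and directly (Section 3): after using Proposition \ref{p1} to show the set $E$ in \eqref{E} is nonempty, setting $k_0=\min E$, establishing \eqref{t1f} and $u_1^{(2k_0)}=u_2^{(2k_0)}$ on $B_{R_0}$ by unique continuation, it inserts $\phi=\overline{u_1^{(2k_0)}}$ into \eqref{l2c} to obtain $\int_\Omega(c_1^{-2}-c_2^{-2})|u_1^{(2k_0)}|^2\,dx=0$; the global monotonicity \eqref{t1b} then yields $c_1=c_2$ at once, and Theorem \ref{t2} yields $f_1=f_2$. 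No cutoff functions and no topology are needed, precisely because the monotonicity is global; the proof of Theorem \ref{t3} in Section 4 is then a localization of this argument (it reuses its steps, not its conclusion, so your reduction is not circular). Your route is heavier, but it isolates a real point the paper glosses over: when $B_{R_0}\setminus\overline{\Omega}$ is disconnected (say $\Omega$ a spherical shell with $c_1\neq c_2$ throughout $\Omega$), no compact sets $K_j\subset\overline{\Omega}$ covering $\{c_1\neq c_2\}$ can make $\mathcal O$ connected, so Corollary \ref{t1} is \emph{not} literally a specialization of Theorem \ref{t3} as stated; your hole-filling together with relaxing ``$K_j\subset\overline{\Omega}$'' to ``$K_j\subset B_{R_0}$'' is genuinely needed, and your construction of $K_1$ is sound.

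The one soft spot is your justification of that relaxation. The hypothesis $K_j\subset\overline{\Omega}$ enters the proof of Theorem \ref{t3} exactly once, and not where you say: it guarantees $B_{R_0}\setminus\overline{\Omega}\subset\mathcal O$, so that the vanishing of $u^{(2k_0+2)}=u_1^{(2k_0+2)}-u_2^{(2k_0+2)}$ on $B_{R_0}\setminus\overline{\Omega}$ (from Lemma \ref{l2}) seeds the unique continuation argument giving $u^{(2k_0+2)}=0$ on the whole connected set $\mathcal O$; the key identity $\int_{K_j}(c_1^{-2}-c_2^{-2})|u_1^{(2k_0)}|^2\,dx=0$ is itself derived using exactly this fact, since the integration by parts dumps all remaining terms onto $\mathcal O$. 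Under your weakened hypothesis the inclusion $B_{R_0}\setminus\overline{\Omega}\subset\mathcal O$ fails in general, so it must be replaced: since $\bigcup_j K_j$ is compact in $B_{R_0}$ and $\overline{\Omega}\subset B_{R_0}$, the set $\mathcal O$ always contains a shell $\{R_0-\eta<|x|<R_0\}\subset B_{R_0}\setminus\overline{\Omega}$ on which $u^{(2k_0+2)}$ vanishes, and unique continuation on $\mathcal O$ then runs unchanged (in your construction $\mathcal O=\mathcal O_0$ contains such a shell by fiat). With that observation added your reduction is complete; as written, the claim that the proof uses ``only the agreement of the speeds on $\mathcal O$ and the one-sided monotonicity on the $K_j$'' misstates where the hypothesis acts and hides the one step that actually needs checking.
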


Let us consider the set of data satisfying,
$$\int_0^{+\infty} t^{2n}u(t,x)dt,\quad x\in\partial\Omega,\quad n\in\mathbb N.$$
By imposing some conditions on this set of data, we obtain the following improvement of Corollary \ref{t1} 

\begin{Thm}\label{t5} 
For $j=1,2$, let $f_j\in H^1(\R^3)$ be non-uniformly  vanishing and real valued. Let $c_j\in L^\infty(\R^3)$ be  an admissible sound speed coefficient, such that supp$(f_j)\cup$supp$(c_1-c_2)\subset\overline{\Omega}$ and $c_j$ is constant on  $\R^3\setminus\overline{\Omega}$. Let $u_j$ be the solution of \eqref{eq1} with  $c=c_j$, $f=f_j$ and consider the set
\bel{t5a}\mathcal B:=\left\{n\in\mathbb N:\ \int_0^{+\infty}t^{2n}u_1(t,\cdot)|_{\partial\Omega}dt\not\equiv 0\right\}.\ee
We define $k_0=\inf \mathcal B$, with $k_0=\infty$ if $\mathcal B=\emptyset$, and we assume that if $k_0<\infty$ the function $g\in H^{\frac{1}{2}}(\partial\Omega)$ defined by 
\bel{g}g:=\int_0^{+\infty}t^{2k_0}u_1(t,\cdot)|_{\partial\Omega}dt\ee
is of constant sign. We assume also that  
\begin{equation}\label{t5b}  c_1(x)- c_2(x)=\psi(x)h(x),\quad x\in\Omega\end{equation}
with $h\in C(\overline{\Omega})$ of constant sign and $\psi\in H^1(\Omega)\cap L^\infty(\Omega)$ satisfying $\mathcal A\psi=0$ on $\Omega$.
Then  the  implication \eqref{t1c} holds true.
\end{Thm}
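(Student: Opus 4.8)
The plan is to convert the single time-domain measurement into an infinite family of elliptic Cauchy data on $\partial\Omega$ by taking even moments in $t$, and then to exploit the factorisation $c_1-c_2=\psi h$ together with the sign of the lowest surviving moment. Concretely, set $w_n^{(j)}:=\int_0^{+\infty}t^{2n}u_j(t,\cdot)\,\d t$; these integrals converge and define $H^1_{loc}$ functions because admissibility of $c_j$ forces the exponential local energy decay \eqref{d1a}. Multiplying \eqref{eq1} by $t^{2n}$, integrating in $t$ and integrating by parts twice (the boundary terms at $t=0$ vanish since $\partial_t u_j(0,\cdot)=0$, and those at $t=+\infty$ vanish by \eqref{d1a}) yields the recursion $\mathcal A w_0^{(j)}=0$ and $\mathcal A w_n^{(j)}=-2n(2n-1)c_j^{-2}w_{n-1}^{(j)}$ on $\R^3$ for $n\geq1$. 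Since outside $\overline\Omega$ the two coefficients coincide and $u_1=u_2$ on $\R_+\times\partial\Omega$ with vanishing exterior initial data, uniqueness for the exterior wave equation gives $u_1=u_2$ on $\R_+\times(\R^3\setminus\overline\Omega)$; integrating in $t$ this produces $w_n^{(1)}=w_n^{(2)}$ together with $\partial_{\nu_a}w_n^{(1)}=\partial_{\nu_a}w_n^{(2)}$ on $\partial\Omega$ for every $n$, so $W_n:=w_n^{(1)}-w_n^{(2)}$ has vanishing Cauchy data on $\partial\Omega$.

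First I would control the moments below the index $k_0$. Because $w_0^{(j)}$ is globally $\mathcal A$-harmonic and tends to $0$ at infinity (the exterior medium is constant, so the escaping wave obeys Huygens' principle), the maximum principle gives $w_0^{(j)}\equiv0$; in particular $0\notin\mathcal B$ and $k_0\geq1$. Arguing by induction on $m<k_0$: if $w_{m-1}^{(j)}\equiv0$ on $\Omega$ then the recursion shows $\mathcal A w_m^{(j)}=0$ on $\Omega$, while $w_m^{(1)}|_{\partial\Omega}=0$ by definition of $k_0=\inf\mathcal B$ and $w_m^{(2)}|_{\partial\Omega}=w_m^{(1)}|_{\partial\Omega}=0$; uniqueness for the Dirichlet problem then forces $w_m^{(1)}=w_m^{(2)}=0$ on $\Omega$. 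Feeding this into the recursion at $m=k_0$ shows that $w_{k_0}^{(1)}$ and $w_{k_0}^{(2)}$ are both $\mathcal A$-harmonic on $\Omega$ with the common boundary value $g$. Hence $W_{k_0}=0$ on $\Omega$, and, as $g$ is of constant sign with $g\not\equiv0$ (because $k_0\in\mathcal B$), the strong maximum principle yields that $w_{k_0}^{(2)}$ is of constant, everywhere non-vanishing sign on $\Omega$.

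The sign argument then uses $\psi$. Since $\mathcal A\psi=0$ on $\Omega$ and $W_{k_0+1}$ has vanishing Cauchy data, Green's formula gives $\int_\Omega\psi\,\mathcal A W_{k_0+1}\,\d x=0$. Substituting the recursion for $\mathcal A W_{k_0+1}$ and using $W_{k_0}=0$ on $\Omega$ leaves only $\int_\Omega\psi\,(c_1^{-2}-c_2^{-2})\,w_{k_0}^{(2)}\,\d x=0$. Writing $c_1^{-2}-c_2^{-2}=-\psi h\rho$ with $\rho:=(c_1+c_2)c_1^{-2}c_2^{-2}>0$ converts this into $\int_\Omega\psi^2\,h\,\rho\,w_{k_0}^{(2)}\,\d x=0$, whose integrand has a fixed sign because $\psi^2\geq0$, $\rho>0$ and $w_{k_0}^{(2)}$ is sign-definite; hence $\psi^2h\equiv0$ on $\Omega$, so $\psi$ vanishes on the open set $\{h\neq0\}$ (if $h\equiv0$ there is nothing to prove), and unique continuation for the $\mathcal A$-harmonic function $\psi$ forces $\psi\equiv0$, i.e. $c_1=c_2$. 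The case $k_0=\infty$ is excluded a posteriori: it would make every even moment of $u_1|_{\partial\Omega}$ vanish, and since $u_1(\cdot,x)$ extends to an even, exponentially decaying function of $t$, all its moments would then vanish, forcing $u_1\equiv0$ on $\R_+\times\partial\Omega$ and hence $f_1\equiv0$ by the fixed-speed injectivity underlying \eqref{t1c}, contradicting non-uniform vanishing. Once $c_1=c_2$, the difference $u_1-u_2$ solves \eqref{eq1} with a single known speed and vanishing boundary data, so $f_1=f_2$ by that same fixed-speed uniqueness, which closes \eqref{t1c}.

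The main obstacle I expect is not the sign manipulation but making the moment reduction fully rigorous: proving that the $w_n^{(j)}$ are well defined and regular enough to carry Cauchy data in the low-regularity regime allowed here (piecewise constant $c$, $f\in H^1$), and in particular that the interior and exterior conormal traces of each $w_n^{(j)}$ match across $\partial\Omega$, which is precisely what legitimises the Green identity with zero Cauchy data. A secondary delicate point is the threshold behaviour underlying $w_0^{(j)}\equiv0$ and the convergence of the moment expansion, for which the exponential decay \eqref{d1a} is essential.
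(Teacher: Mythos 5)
Your proposal follows essentially the paper's own route. Your moments $w_n^{(j)}$ are, up to the factor $(2n)!$, the coefficients $u_j^{(2n)}$ of the paper's Laplace-transform expansion (Lemma \ref{l1}); your recursion is the even part of \eqref{l1b}; your vanishing-Cauchy-data observation for the differences $W_n$ is exactly Lemma \ref{l2}; and the core of the argument (harmonicity of $w_{k_0}^{(j)}$ on $\Omega$ with common boundary value $g$, strong maximum principle, orthogonality against $\psi$, the factorization $c_1^{-2}-c_2^{-2}=-\psi h\rho$ with $\rho=(c_1+c_2)c_1^{-2}c_2^{-2}>0$, and unique continuation for $\psi$) is the paper's proof essentially verbatim, including the final appeal to fixed-speed uniqueness to get $f_1=f_2$. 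Your two genuine deviations are benign: you derive the recursion by multiplying \eqref{eq1} by $t^{2n}$ and integrating by parts in time rather than by expanding $\hat u$ in powers of $p$, and you exclude $\mathcal B=\emptyset$ by a Fourier-analyticity/moment argument on the exponentially decaying boundary trace, where the paper runs an elliptic induction and invokes its Proposition \ref{p1} (a Liouville-theorem argument for the odd extension of $\hat u$); both versions are correct given \eqref{d1a}.

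The one step that fails as written is your base case $w_0^{(j)}\equiv0$. You assert that $w_0^{(j)}$ is globally $\mathcal A$-harmonic and \emph{tends to $0$ at infinity by Huygens' principle}, and then apply the maximum principle. But admissibility is a purely local statement: in \eqref{d1a} both $C$ and $\delta$ depend on $R$, so it gives no control whatsoever on $w_0^{(j)}(x)$ as $|x|\to\infty$; and the strong Huygens principle is not available here, since the medium is inhomogeneous inside $\Omega$ (and $a$ is variable in $B_{R_1}$), so the wave keeps scattering and radiating into the exterior for all times. Since your induction below $k_0$, hence the harmonicity of $w_{k_0}^{(j)}$ on $\Omega$, is anchored at $w_0^{(j)}\equiv0$ (the definition of $\mathcal B$ carries no information at $n=0$), this gap must be closed. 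The paper's fix is local and elementary: write $u_j=\partial_t v_j$, where $v_j$ solves \eqref{tata} with data $(0,f_j)$; then $\int_0^T u_j\,dt=v_j(T,\cdot)$, and \eqref{d1a} applied to $v_j$ gives $w_0^{(j)}=\lim_{T\to\infty}v_j(T,\cdot)=0$ in $H^1(B_{R_0})$, with no reference to spatial infinity. With that repair — and noting that the ``fixed-speed injectivity'' you invoke twice is itself nontrivial for $c\in L^\infty$ (standard unique continuation \`a la Tataru does not apply, which is exactly why the paper proves it separately as Theorem \ref{t2} by a spectral argument) — your argument is complete and coincides with the paper's.
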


In the same way as in Theorem \ref{t3}, we can also prove the following extension of Theorem \ref{t5}.
\begin{cor}\label{c1} 
Let the condition of Corollary \ref{t1} be fulfilled. Let $u_j$ be the solution of \eqref{eq1} with $c=c_j$ and $f=f_j$. Consider $K_1,\ldots,K_N$, $N$ compact sets, with no-empty interior, included in $\overline{\Omega}$ 
such that \eqref{t3a}-\eqref{t3c} are fulfilled and assume that
\begin{equation}\label{c1a} \forall j=1,\ldots,N,\quad  c_1(x)-c_2(x)=\psi_j(x)h_j(x),\quad x\in K_j,\end{equation}
with $h_j\in C_0(K_j)$ of constant sign and $\psi_j\in H^1(\Omega)\cap L^\infty(\Omega)$ satisfying $\mathcal A\psi_j=0$ on $\Omega$.
Assume also that if $\mathcal B\neq\emptyset$ and $k_0=\min \mathcal B$, the function $g$ given by \eqref{g} is of constant sign.
Then  the  implication \eqref{t1c} holds true.
\end{cor}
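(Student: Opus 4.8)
The plan is to graft the moment-and-factorization mechanism of Theorem~\ref{t5} onto the localization procedure that already takes one from the global monotonicity of Corollary~\ref{t1} to the local one of Theorem~\ref{t3}. First I would record the reduction common to all the preceding statements. Taking the Laplace transform in time, the functions $U_j(\tau,\cdot):=\int_0^{+\infty}e^{-\tau t}u_j(t,\cdot)\,\d t$ solve $(\mathcal A+\tau^2c_j^{-2})U_j=\tau c_j^{-2}f_j$ and are holomorphic for $\Re\tau>0$; the exponential local energy decay \eqref{d1a} granted by admissibility makes the even moments $\int_0^{+\infty}t^{2n}u_j|_{\partial\Omega}\,\d t$ converge and supplies the low-frequency expansion of $U_j$ on $B_{R_0}$ as $\tau\to0$. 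Since $c_1=c_2=c_0$ and $f_j$ are supported in $\overline\Omega$, both $U_1$ and $U_2$ solve the same decaying exterior problem, so $u_1=u_2$ on $\R_+\times\partial\Omega$ forces the full Cauchy data $(U_1,\partial_\nu U_1)=(U_2,\partial_\nu U_2)$ on $\partial\Omega$; Green's formula for $\mathcal A$ then yields, for every $\phi$ with $(\mathcal A+\tau^2c_2^{-2})\phi=0$ in $\Omega$,
\[
\tau\int_\Omega\big(c_1^{-2}f_1-c_2^{-2}f_2\big)\phi\,\d x=\tau^2\int_\Omega\big(c_1^{-2}-c_2^{-2}\big)U_1\,\phi\,\d x .
\]
Expanding in powers of $\tau$ and matching coefficients recasts this as a hierarchy relating the low-frequency profiles of $U_1$ to the boundary moments, in which $k_0=\inf\mathcal B$ singles out the first nontrivial order and $g$ of \eqref{g} is its trace.

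Next I would localize exactly as in Theorem~\ref{t3}. By \eqref{t3c} the factor $c_1^{-2}-c_2^{-2}$ vanishes on $\mathcal O=B_{R_0}\setminus\bigcup_jK_j$, so the right-hand side of the identity is supported in $\bigcup_jK_j$ and, the $K_j$ being pairwise disjoint by \eqref{t3a}, splits into a sum over the individual $K_j$; connectedness of $\mathcal O$ in \eqref{t3b} together with unique continuation for $\mathcal A+\tau^2c_j^{-2}$ is what lets one treat the $K_j$ one at a time and carry the equality of profiles through $\mathcal O$. On a fixed $K_j$ I would insert the local factorization \eqref{c1a}, so that there $c_1^{-2}-c_2^{-2}=-\psi_jh_j(c_1+c_2)c_1^{-2}c_2^{-2}$, and choose the test function $\phi$ whose leading profile as $\tau\to0$ is the $\mathcal A$-harmonic function $\psi_j$ itself, an admissible choice precisely because $\mathcal A\psi_j=0$ on $\Omega$. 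At the order dictated by $k_0$ the integrand becomes $h_j\,\psi_j^2$ times a positive weight times the leading profile of $U_1$, so the whole $K_j$-integral has a forced sign; the positivity argument of Theorem~\ref{t5}, now run on $K_j$, then shows it must vanish, whence $\psi_j^2h_j\equiv0$ and therefore $c_1=c_2$ on $K_j$. Running this over $j=1,\dots,N$ and using \eqref{t3c} gives $c_1=c_2$ on $B_{R_0}$, hence everywhere; with equal speeds the problem collapses to the single-speed photoacoustic problem, for which the initial-data-to-boundary map of an admissible speed is injective, so $f_1=f_2$ and \eqref{t1c} follows.

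The hard part will be the same as in Theorem~\ref{t3}: guaranteeing that the leading low-frequency profile of $U_1$ is nontrivial and sign-definite on each $K_j$. Without this the sign argument is empty, since $\int_{K_j}h_j\psi_j^2(\cdots)=0$ would not force $\psi_j=0$ on the support of $h_j$. This is exactly where the constant-sign hypothesis on $g$ in \eqref{g} is used, to transfer a definite sign from the boundary moment to the interior profile, while connectedness of $\mathcal O$ and unique continuation keep that profile from vanishing on $K_j$. A secondary burden is to justify the termwise low-frequency expansion of the full-space resolvents $(\mathcal A+\tau^2c_j^{-2})^{-1}$ in dimension three, where $\tau=0$ lies at the bottom of the continuous spectrum, and to interchange it with integration over the bounded sets $K_j$; both are controlled by the exponential decay \eqref{d1a} built into Definition~\ref{d1}.
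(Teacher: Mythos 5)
Your proposal is correct and takes essentially the same route as the paper: the paper's proof of Corollary \ref{c1} is precisely the grafting you describe, obtaining the localized identity $\int_{K_j}(c_1^{-2}-c_2^{-2})u_1^{(2k_0)}\psi_j\,dx=0$ by multiplying the moment equation \eqref{l2e} at order $2k_0+2$ by $\chi_j\psi_j$ (the cutoffs of Theorem \ref{t3}, whose derivatives are supported in $\mathcal O$ where unique continuation forces $u^{(2k_0+2)}=0$), and then running the factorization and maximum-principle sign argument of Theorem \ref{t5} to conclude $\psi_jh_j\equiv0$, hence $c_1=c_2$, with Theorem \ref{t2} giving $f_1=f_2$. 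The only cosmetic difference is that you package the low-frequency hierarchy via Green's identity with $\tau$-dependent Helmholtz test functions, while the paper expands first (Lemmas \ref{l1}--\ref{l2}) and tests the expanded hierarchy directly with the $\mathcal A$-harmonic $\psi_j$ localized by the cutoff.
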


The sign condition imposed on the Dirichlet data \eqref{g}, under consideration in Theorem \ref{t5} and Corollary \ref{c1}, depends on the sound speed coefficient $c$ and the initial pressure $f$. We give below general  context where this condition is fulfilled.

\begin{prop}\label{r1} Let  $f_1\in H^1(\R^3)$ be non-uniformly  vanishing and real valued. Let $c_1\in L^\infty(\R^3)$ be  an admissible sound speed coefficient, such that supp$(f_1)\subset\overline{\Omega}$ and $c_1=c_0$  on  $\R^3\setminus\overline{\Omega}$ with $c_0>0$ a constant. Assuming that $\mathcal A=-\Delta$ and condition \eqref{kmm} is fulfilled with $f=f_1$ and $c=c_1$, we will have $\min\mathcal B=1$ and the function $g$ given by \eqref{g} is a non-vanishing constant function. In the same way, if \eqref{kmm} is not fulfilled but the function
\bel{h}h(x):=\int_{\R^3}|x-y|^2\frac{f_1(y)}{c^{2}_1(y)}dy+\frac{c_0^2}{4\pi}\int_{\R^3}\int_{\R^3}\left(c_0^{-2}-c_1(y)^{-2}\right)\frac{f_1(y_1)}{|y-y_1|c^{2}_1(y_1)}dydy_1,\quad x\in\partial\Omega\ee
is non-uniformly vanishing and of constant sign, we will have $\min\mathcal B=2$ and the function $g$ given by \eqref{g} is  of constant sign. Therefore, in view of Theorem \ref{t5} and Corollary \ref{c1}, in these two situations by assuming that one of the conditions \eqref{t5b} and \eqref{c1a} is fulfilled  the implication \eqref{t1c} will hold true.\end{prop}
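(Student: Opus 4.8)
The plan is to read off the family $\{\int_0^\infty t^{2n}u_1\,dt\}_n$ from the low--frequency (i.e. $\tau\to0$) expansion of the time Laplace transform of $u_1$, and to compute its first non--vanishing term explicitly. Since $c_1$ is admissible, the local energy estimate \eqref{d1a} forces $u_1(t,\cdot)$ to decay exponentially on $B_{R_0}$, so that $v_n(x):=\int_0^\infty t^{2n}u_1(t,x)\,dt$ is a well defined element of $H^1(B_{R_0})$ and $U(\tau,x):=\int_0^\infty e^{-\tau t}u_1(t,x)\,dt$ is holomorphic in $\tau$ near $0$. Writing $U=\sum_{k\ge0}U_k\tau^k$ one has $v_n=(2n)!\,U_{2n}$, so that $\min\mathcal B$ is the smallest $n$ with $U_{2n}|_{\partial\Omega}\not\equiv0$ and $g=(2k_0)!\,U_{2k_0}|_{\partial\Omega}$ with $k_0=\min\mathcal B$. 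Taking the Laplace transform of \eqref{eq1} (with $\mathcal A=-\Delta$ and $\partial_tu_1(0,\cdot)=0$) shows that $U$ is the outgoing solution of $(-\Delta+c_1^{-2}\tau^2)U=c_1^{-2}\tau f_1$ on $\R^3$, so everything reduces to the behaviour of this resolvent near $\tau=0$.

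Next I would set $c_1^{-2}=c_0^{-2}+b$, with $b:=c_1^{-2}-c_0^{-2}$ supported in $\overline\Omega$, and rewrite the equation as the Lippmann--Schwinger identity $U=R_0(\tau)\big(c_1^{-2}\tau f_1-b\tau^2U\big)$, where $R_0(\tau)$ is the free resolvent with kernel $\frac{e^{-\tau|x-y|/c_0}}{4\pi|x-y|}$, an entire function of $\tau$ whose Taylor coefficients $R_0^{(j)}$ have the explicit kernels $\frac{(-1)^j|x-y|^{j-1}}{4\pi c_0^{\,j}\,j!}$. Matching powers of $\tau$ produces the recursion $-\Delta U_0=0$, $-\Delta U_1=c_1^{-2}f_1$ and $-\Delta U_k=-c_1^{-2}U_{k-2}$ for $k\ge2$, together with the closed formulas $U_k=\sum_{m=0}^kR_0^{(k-m)}S_m$, where $S_1=c_1^{-2}f_1$ and $S_m=-bU_{m-2}$. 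The outgoing condition forces the globally harmonic $U_0$ to vanish, whence $U_0\equiv0$ and $0\notin\mathcal B$; the first order gives the Newtonian potential $U_1(x)=\frac1{4\pi}\int_{\R^3}\frac{c_1^{-2}(y)f_1(y)}{|x-y|}\,dy$, with far field $\frac{M}{4\pi|x|}$ and $M:=\int_{\R^3}c_1^{-2}f_1\,dy$; and the constant kernel of $R_0^{(1)}$ yields $U_2=R_0^{(1)}[c_1^{-2}f_1]=-\frac{M}{4\pi c_0}$.

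If \eqref{kmm} holds then $M\neq0$, so $v_1=2U_2=-\frac{M}{2\pi c_0}$ is a non--vanishing constant, giving $\min\mathcal B=1$ and $g=v_1|_{\partial\Omega}$ a non--vanishing constant, as claimed. If instead \eqref{kmm} fails, then $M=0$, hence $U_2\equiv0$ and $v_1\equiv0$, so $1\notin\mathcal B$; iterating once more, $S_4=-bU_2=0$ gives $U_4=R_0^{(3)}[c_1^{-2}f_1]-R_0^{(1)}[bU_1]$, i.e. the sum of the iterated potentials $-\frac1{24\pi c_0^3}\int_{\R^3}|x-y|^2c_1^{-2}f_1\,dy$ and a constant built from $\int_{\R^3}b(y)U_1(y)\,dy$. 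Because $U_2\equiv0$ the function $U_4$ (hence $v_2=4!\,U_4$) is globally harmonic, and since $M=0$ the first potential is affine in $x$ (its Laplacian equals $6M=0$); thus $v_2$ is, up to a negative multiplicative constant, an affine function whose restriction to $\partial\Omega$ is the function $h$ of \eqref{h}. The constant sign and non--uniform vanishing of $h$ then translate into $2\in\mathcal B$ and $g=v_2|_{\partial\Omega}$ being of constant sign, so $\min\mathcal B=2$. In either case \thmref{t5} (or \corref{c1}) yields implication \eqref{t1c}.

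The technical heart of the argument is twofold. First, one must justify that $U(\tau,\cdot)$ is genuinely holomorphic across $\tau=0$ and that the Neumann expansion above converges for small $\tau$; for $L^\infty$ speeds this is not automatic and is precisely where admissibility enters, since the local energy decay \eqref{d1a} excludes a pole or a branch of the outgoing resolvent at the bottom of the spectrum. Second, and this is the main obstacle, one has to pin down the exact $\tau^4$--coefficient: as $U_4$ solves $-\Delta U_4=0$ only up to a harmonic function, the correct representative must be selected through the prescribed growth $U_k=O(|x|^{k-1})$ inherited from the expansion of $e^{-\tau|x-y|/c_0}$, and the two iterated potentials must be combined with the right constants to recover \eqref{h}. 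The final remark that $g$ is real and inherits the sign of $h$ uses that $f_1$ is real valued.
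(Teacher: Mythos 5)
Your proposal is correct and follows essentially the same route as the paper: both expand the time--Laplace transform of $u_1$ at low frequency through a Lippmann--Schwinger identity with the free resolvent of $-\Delta+c_0^{-2}\tau^2$, identify $u_1^{(2)}$ as the constant multiple of $\int_{\R^3} c_1^{-2}f_1\,dx$ (so \eqref{kmm} gives $\min\mathcal B=1$ and $g$ a nonzero constant), then, when \eqref{kmm} fails, compute $u_1^{(4)}$ as a second--moment potential plus an iterated potential and invoke Theorem \ref{t5} and Corollary \ref{c1}. The only methodological difference is that the paper obtains the integral identity by rescaling time so the speed is $1$ outside $\overline{\Omega}$ and citing \cite[Lemma 3.1]{LU} at real frequencies, with quantitative error bounds (Taylor estimate plus Young's inequality), whereas you derive it directly from the decaying resolvent kernel for $\re\,\tau>0$ and match Taylor coefficients; this is legitimate because both sides are analytic near $\tau=0$ thanks to admissibility, exactly as in Lemma \ref{l1}. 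One point you glossed over: your (correct) constants do not reproduce \eqref{h} verbatim. Your $U_4=R_0^{(3)}[c_1^{-2}f_1]-R_0^{(1)}[bU_1]$, with $U_1=+\frac{1}{4\pi}\int\frac{c_1^{-2}(y)f_1(y)}{|x-y|}dy$ (the sign forced by \eqref{l1b}, opposite to the sign in \eqref{r1e}), yields relative weight $\frac{6c_0^2}{4\pi}$ between the two integrals rather than the $\frac{c_0^2}{4\pi}$ printed in \eqref{h}, so your claim that $v_2|_{\partial\Omega}$ is a negative multiple of \eqref{h} as printed is not literally true; the mismatch traces to a sign slip and a missing $3!$ in the paper's own computation, and your argument goes through unchanged once $h$ is defined with your constants, the constant--sign conclusion being unaffected.
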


Let us observe that in Theorem \ref{t3} and Corollary \ref{t3} we prove the simultaneous unique determination of the sound speed coefficient $c$ and the initial pressure $f$ under the monotonicity condition \eqref{t1b} and the admissibility condition of Definition \ref{d1}. As mentioned above, the only other results that we are aware of about the TAT and PAT problem with unknown sound speed coefficient $c$ and initial pressure $f$ can be   found in \cite{KM,LU}. In contrast to \cite{LU} we can show the simultaneous determination of the unknown parameters $c$ and $f$ when $c$ is variable and in contrast to \cite{KM} we only assume that the initial pressure $f$ is non-uniformly vanishing. Moreover, our result is stated in an inhomogeneous medium described by a general  elliptic operator $\mathcal A$ with variable coefficients while the results of \cite{LU,KM} are restricted to $\mathcal A=-\Delta_x$. While the result of Corollary \ref{t1} is subjected to the global monotonicity condition \eqref{t1b}, we prove in Theorem  \ref{t3} that under some suitable assumptions this condition can be replaced by the local monotonicity condition \eqref{t3d}. The result of Theorem \ref{t3} can for instance be applied to the determination of a piecewise constant (or a smooth counterpart) sound speed coefficient $c$ provided that condition \eqref{d1a} is fulfilled. For this reason we believe that Theorem \ref{t3} can be useful in several practical situations.

By assuming that the Dirichlet trace $g$, given by \eqref{g}, is of constant sign, we improve in Theorem \ref{t5}  the results of Corollary \ref{t1}  by showing the unique determination of sound speed coefficients whose unknown part is a product of a function of constant sign with a harmonic function. In the spirit of Theorem \ref{t3}, we  extend in Corollary \ref{c1} the result of Theorem \ref{t5} to give the determination of sound speed coefficients whose unknown part is locally the product of a function of constant sign with a harmonic function. In that sense, Corollary \ref{c1} can for instance be applied to the determination of a piecewise harmonic (or smooth) sound speed coefficient $c$ provided that condition \eqref{d1a} is fulfilled. Note that the main assumption of Theorem \ref{t3} and Corollary \ref{c1} is imposed on the measurement under consideration given by the sign condition of \eqref{g} which is defined by the data $u(t,x)$, $(t,x)\in\R_+\times\partial\Omega$, with $u$ the solution of \eqref{eq1}. This condition depends on the sound speed coefficient $c$ and the initial pressure $f$ and we give in Proposition \ref{r1} some general examples of situation where this condition is fulfilled  with explicit conditions imposed to  $c_1$ and $f_1$. Namely, we show in Proposition \ref{r1} that the sign condition imposed on the Dirichlet trace $g$ in Theorem \ref{t5} and Corollary \ref{c1} will be fulfilled if $\mathcal A=-\Delta$ and \eqref{kmm} is fulfilled with $c=c_1$ and $f=f_1$. We prove also that when \eqref{kmm} is not fulfilled, a suitable sign condition imposed on the function \eqref{h} implies the sign condition imposed on the Dirichlet trace $g$ for Theorem \ref{t5} and Corollary \ref{c1}.  In that sense the condition of Theorem \ref{t5} and Corollary \ref{c1} extend the condition \eqref{kmm} under consideration in  \cite{KM} and we give an explicit condition  allowing Theorem \ref{t5} and Corollary \ref{c1} to hold true when \eqref{kmm} is not fulfilled.

Let us remark that our analysis differs from the one of \cite{KM,LU} mainly based on applications of results of scattering theory. Here our analysis is mostly based on an explicit asymptotic expansion of the Laplace transform in time of the solution of \eqref{eq1} for small frequency parameter. This approach allows us to consider a general elliptic differential operator not restricted to the Laplacian under consideration in \cite{KM,LU}. Moreover, in contrast to the analysis of \cite{KM,LU}, where such asymptotic expansions have been considered to order two or three, in this article we consider the complete asymptotic expansion. Using such property we can show the determination of the sound speed coefficient $c$ independently of the initial pressure $f$. In our analysis, we use also some long time asymptotic property of the solution of \eqref{eq1} combined with some properties of complex analysis.

In contrast to \cite{LU} we consider the Laplace transform of solutions of \eqref{eq1} and not their Fourier transform. This approach allows us to define the Laplace transform of the solutions of \eqref{eq1} as function taking values in $H^1(\R^3)$. Then, using the local energy decay \eqref{d1a} we can consider the holomorphic extension of the Laplace transform in time of the solution of \eqref{eq1} restricted to $x\in B_{R_0}$. 

Recall that, for smooth coefficients $a$ and $c$, the local energy decay \eqref{d1a} is a consequence of the non-trapping condition also considered in \cite{LU}. Moreover, it holds true for any odd dimension of space higher than $2$, hence our results hold true in any space $\R^n$ with $n\geq3$ odd. According to \cite{Vo2}, even for smooth coefficients $a$ and $c$, the non-trapping condition implies \eqref{d1a} but it is not equivalent in general to \eqref{d1a}. Moreover, it is not clear that the smoothness of the coefficients $a$ and $c$ are a necessary condition for proving \eqref{d1a}. For this reason, we define our class of admissible coefficients by mean of Definition \ref{d1} in order to prove our result with the weakest assumptions.

\ \\
This article is organized as follows. In Section 2, we recall some general properties of the Laplace transform in time of the solution of \eqref{eq1} as well the identities \eqref{l2c} which are the key ingredients of our proof. Then, in Section 3 we exploit all these results for proving Corollary \ref{t1} while Section 4 and 5 will be devoted to the proof of Theorem \ref{t3}, Theorem \ref{t5}, Corollary \ref{c1} and Proposition \ref{r1}. In Section 6,  we establish the connection between the transmission eigenvalues problem and the TAT/PAT problem. More precisely, we extend the analysis of \cite{HK} by giving in Theorem \ref{t4} an explicit link between the transmission eigenvalues and the TAT/PAT problem. Finally, in the appendix we recall and prove a result about the simultaneous determination of the initial state and the initial velocity of wave equations when the sound speed coefficient  $c$ is known but  non-smooth.

\section{Preliminary properties of solutions of \eqref{eq1} with admissible coefficient $c$}
From now on and in all the remaining parts of this article, for any open set $U$ of $\mathbb C$ and any Banach space $X$, we denote by $\mathcal H(U;X)$ the set of holomorphic functions on $U$ taking values in $X$.
Let us first recall that the solution of \eqref{eq1} is lying in $C^1([0,+\infty);L^2(\R^3))\cap C([0,+\infty);H^1(\R^3))$. Moreover one can verify (see e.g. \cite[Proposition 8]{HK}) that the Laplace transform  in time $\hat{u}$ of of the solution $u$ is well defined by 
$$\hat{u}(p,\cdot)=\int_0^{+\infty}e^{-pt}u(t,\cdot),\quad p\in\mathbb C_+:=\{z\in\mathbb C:\re z>0\}.$$
In addition, we have $p\mapsto \hat{u}(p,\cdot)\in \mathcal H(\mathbb C_+;H^1(\R^3))$ and applying the Laplace transform in time to \eqref{eq1} we deduce that
\begin{equation}\label{eq2}\mathcal A \hat{u}(p,x)+c^{-2}(x)p^2\hat{u}(p,x)=pc^{-2}(x)f(x),\quad x\in\R^3,\ p\in\mathbb C_+.\end{equation}

From now on and in all the remaining parts of this article, we denote by $\delta$ the constant appearing in \eqref{d1a} with $R=R_0$. For the proof of Corollary \ref{t1}, we will need first to recall some preliminary properties of the Laplace transform in time $\hat{u}(p,\cdot)$ of the solution $u$ of \eqref{eq1} when $c$ is admissible.  Namely, using the estimate \eqref{d1a} we will show that the map $p\mapsto \hat{u}(p,\cdot)|_{B_{R_0}}\in \mathcal H(\mathbb C_+;H^1(B_{R_0}))$ admits an analytic extension to $\mathbb C_{-\delta}:=\{z\in\mathbb C:\re z>-\delta\}$. Then, using this analytic extension we will derive the full asymptotic properties of $\hat{u}(p,\cdot)|_{B_{R_0}}$ as $p\to0$.

Assuming that $c$ is admissible and supp$(f)\subset\overline{\Omega}$ we can show the following first preliminary result.
\begin{lem}\label{l1} Assume that $c$ is admissible and supp$(f)\subset\overline{\Omega}$. Then, we can prove that $p\mapsto\hat{u}(p,\cdot)|_{B_{R_0}}$ admits an holomorphic extension to an element of 
$\mathcal H(\mathbb C_{-\delta};H^1(B_{R_0}))$. Moreover, fixing $(u^{(k)})_{k\geq0}$ a sequence in  $H^1(B_{R_0})$ defined by 
\bel{l1aa}u^{(k)}(x)=\frac{(-1)^k\int_0^{+\infty}t^ku(t,x)dt}{k!},\quad x\in B_{R_0},\ k\geq0,\ee
we have
\begin{equation}\label{l1a}\hat{u}(p,\cdot)|_{B_{R_0}}=\sum_{k=0}^\infty u^{(k)}p^k,\quad p\in\mathbb D_\delta:=\{z\in\mathbb C:|z|<\delta\}.\end{equation}
Finally, the functions $u^{(k)}$, $k\geq0$,  satisfy the following conditions
\begin{equation}\label{l1b}u^{(0)}= 0,\ \mathcal A u^{(1)}=c^{-2}f,\ \mathcal A u^{(k)}=-c^{-2}u^{(k-2)},\ k\geq 2,\ \textrm{in }B_{R_0},\end{equation}
\begin{equation}\label{l1c}\norm{u^{(k)}}_{H^1(B_{R_0})}\leq C\delta^{-k},\quad k\in\mathbb N,\end{equation}
with $C>0$ independent of $k$.
\end{lem}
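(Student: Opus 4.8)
The plan is to establish the four assertions of Lemma~\ref{l1} in the following order: first the holomorphic extension, then the power series expansion \eqref{l1a} together with the identification \eqref{l1aa} of its coefficients, then the recurrence \eqref{l1b}, and finally the exponential-type bound \eqref{l1c}. For the holomorphic extension, I would start from the local energy decay estimate \eqref{d1a} with $R=R_0$. Since the solution $u$ of \eqref{eq1} (which is \eqref{tata} with $g=0$) satisfies $\norm{(u(t,\cdot),\partial_t u(t,\cdot))}_{H^1(B_{R_0})\times L^2(B_{R_0})}\leq Ce^{-\delta t}\norm{f}_{H^1(\R^3)}$, the integral $\int_0^{+\infty}e^{-pt}u(t,\cdot)|_{B_{R_0}}\,\d t$ converges absolutely in $H^1(B_{R_0})$ whenever $\re p>-\delta$. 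I would check that this integral is differentiable in $p$ by differentiating under the integral sign (justified by the same exponential bound, since $t\mapsto t e^{-\delta t}$ is integrable), which shows the extended map lies in $\mathcal H(\mathbb C_{-\delta};H^1(B_{R_0}))$ and agrees with $\hat u(p,\cdot)|_{B_{R_0}}$ on $\mathbb C_+$ by uniqueness of analytic continuation.

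For the expansion \eqref{l1a}, since the extension is holomorphic on $\mathbb C_{-\delta}$ it is in particular holomorphic on the disc $\mathbb D_\delta\subset\mathbb C_{-\delta}$, so it equals its Taylor series there, convergent in $H^1(B_{R_0})$. To identify the Taylor coefficients with the $u^{(k)}$ of \eqref{l1aa}, I would expand $e^{-pt}=\sum_{k\geq0}\frac{(-pt)^k}{k!}$ inside the Laplace integral and interchange sum and integral; the dominated-convergence justification again comes from the decay $\|u(t,\cdot)\|_{H^1(B_{R_0})}\leq Ce^{-\delta t}$, which makes each $\int_0^{+\infty}t^k u(t,\cdot)\,\d t$ a well-defined element of $H^1(B_{R_0})$ and controls the tail. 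This yields $u^{(k)}=\frac{(-1)^k}{k!}\int_0^{+\infty}t^k u(t,\cdot)\,\d t$, matching \eqref{l1aa}.

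For the recurrence \eqref{l1b}, I would insert the series \eqref{l1a} into the Laplace-domain equation \eqref{eq2}, which on $B_{R_0}$ reads $\mathcal A\hat u(p,\cdot)+c^{-2}p^2\hat u(p,\cdot)=pc^{-2}f$. Substituting $\hat u=\sum_k u^{(k)}p^k$ and matching powers of $p$ gives $\mathcal A u^{(0)}=0$ (the $p^0$ term), $\mathcal A u^{(1)}=c^{-2}f$ (the $p^1$ term), and $\mathcal A u^{(k)}+c^{-2}u^{(k-2)}=0$ for $k\geq2$. The vanishing $u^{(0)}=0$ then follows because $u^{(0)}=\hat u(0,\cdot)|_{B_{R_0}}=\int_0^{+\infty}u(t,\cdot)|_{B_{R_0}}\,\d t$, and by the energy decay the energy tends to zero, forcing the time-average of the $H^1$ data to vanish; alternatively one argues that $u^{(0)}$ is $\mathcal A$-harmonic and matches the decaying far-field behaviour, hence is zero. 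I should be careful that these equations hold only on $B_{R_0}$, where the expansion is valid.

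Finally, for the bound \eqref{l1c}, the cleanest route is Cauchy's estimate for the vector-valued holomorphic function on $\mathbb D_\delta$: since $p\mapsto\hat u(p,\cdot)|_{B_{R_0}}$ is holomorphic on $\mathbb C_{-\delta}$ and bounded on, say, the circle $|p|=\delta'$ for each $\delta'<\delta$, the Taylor coefficients satisfy $\norm{u^{(k)}}_{H^1(B_{R_0})}\leq M(\delta')(\delta')^{-k}$; taking $\delta'\to\delta$ (or simply absorbing constants) gives \eqref{l1c} with a uniform $C$. The main obstacle I anticipate is the rigorous justification of differentiating and expanding the Laplace integral in the vector-valued (Bochner) setting and, relatedly, pinning down $u^{(0)}=0$ cleanly; the interchange of limits must be backed by the exponential decay \eqref{d1a}, and one must confirm that the restriction to $B_{R_0}$ is exactly what makes \eqref{eq2} usable termwise, since the decay estimate and hence the holomorphic extension are available only on the bounded region.
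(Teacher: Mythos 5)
Your overall strategy (holomorphic extension from the decay estimate \eqref{d1a}, termwise expansion of the Laplace integral, insertion of the power series into \eqref{eq2} restricted to $B_{R_0}$) is the same as the paper's, but there is one genuine gap: your proof of $u^{(0)}=0$. Matching the $p^0$ coefficient only yields $\mathcal A u^{(0)}=0$ in $B_{R_0}$, whereas \eqref{l1b} asserts the strictly stronger statement $u^{(0)}\equiv 0$, which is what is actually used later (e.g.\ in Lemma \ref{l2} and Proposition \ref{p1}). Neither of your two suggested justifications works. The decay $\norm{u(t,\cdot)}_{H^1(B_{R_0})}\leq Ce^{-\delta t}$ makes $\int_0^{+\infty}u(t,\cdot)dt$ absolutely convergent, but it in no way forces the integral to vanish (compare $\int_0^{+\infty}e^{-t}dt=1$); ``the energy tends to zero'' says nothing about the value of the time integral. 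Your alternative argument --- that $u^{(0)}$ is $\mathcal A$-harmonic and ``matches the decaying far-field behaviour'' --- is not available either: the holomorphic extension, and hence $u^{(0)}$, is defined only on $B_{R_0}$, so there is no far-field information to match, and an $\mathcal A$-harmonic function on a ball with no boundary condition need not vanish. The paper's proof of this point is a specific trick exploiting the fact that $\partial_tu(0,\cdot)=0$: let $v$ solve \eqref{tata} with data $(0,f)$; then $\partial_t v$ solves \eqref{eq1} with data $(f,0)$, so $u=\partial_t v$ by uniqueness, whence
$$\int_0^{T}u(t,\cdot)dt=v(T,\cdot)-v(0,\cdot)=v(T,\cdot)\longrightarrow 0 \quad\textrm{in } H^1(B_{R_0}),$$
by the admissibility estimate \eqref{d1a} applied to $v$. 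This is the step missing from your argument.

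A smaller technical point: your Cauchy-estimate route to \eqref{l1c} does not deliver the bound as stated. The extension is holomorphic on $\mathbb C_{-\delta}$, whose boundary touches the circle $|p|=\delta$ at $p=-\delta$; from the available information the sup of $\norm{\hat u(p,\cdot)}_{H^1(B_{R_0})}$ on $|p|=\delta'$ can only be bounded by $C(\delta-\delta')^{-1}$, so Cauchy's inequality at radius $\delta'<\delta$, even after optimizing in $\delta'$, gives at best a bound of the form $C(k+1)\delta^{-k}$, not $C\delta^{-k}$. The paper instead estimates directly
$$\norm{u^{(k)}}_{H^1(B_{R_0})}\leq \frac{1}{k!}\int_0^{+\infty}t^k\norm{u(t,\cdot)}_{H^1(B_{R_0})}dt\leq \frac{C}{k!}\int_0^{+\infty}t^ke^{-\delta t}dt\leq C\delta^{-k},$$
an estimate you already invoke to justify the termwise expansion of the Laplace integral; you should use it for \eqref{l1c} as well.
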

\begin{proof}  We start by proving \eqref{l1c} and \eqref{l1a}. Recalling that $u$ satisfies the estimate \eqref{d1a}, one can check that $p\mapsto\hat{u}(p,\cdot)|_{B_{R_0}}\in \mathcal H(\mathbb C_{-\delta};H^1(B_{R_0}))$. Moreover, using estimate \eqref{d1a} and integrating by parts, we get 
$$\int_0^{+\infty}t^k\norm{u(t,\cdot)}_{H^1(B_{R_0})}dt\leq C\int_0^{+\infty}t^ke^{-\delta t}dt\leq C\delta^{-k}k!,\quad  k\geq0,$$
which implies \eqref{l1c}. Therefore, for all $p\in\mathbb D_\delta$, we have
$$\begin{aligned}\hat{u}(p,\cdot)|_{B_{R_0}}=\int_0^{+\infty}e^{-tp}u(t,\cdot)|_{B_{R_0}}dt&=\int_0^{+\infty}\sum_{k=0}^\infty \frac{(-tp)^k}{k!}u(t,\cdot)|_{B_{R_0}}dt\\
\ &=\sum_{k=0}^\infty \left((-1)^k\int_0^{+\infty} t^ku(t,\cdot)|_{B_{R_0}}dt\right)\frac{p^k}{k!},\end{aligned}$$
which clearly implies \eqref{l1a}.

Now let us consider \eqref{l1b}. Let us first prove that $u^{(0)}\equiv 0$. For this purpose, let us consider the solution $v$ of the IVP
$$\left\{\begin{array}{ll}c^{-2}(x)\partial_t^2v+\mathcal A v=0,\quad &\textrm{in}\ \R_+\times\R^3,\\  v(0,x)=0,\quad \partial_tv(0,x)=f(x),&x\in\R^3.\end{array}\right.$$
One can easily check that $u=\partial_tv$ and, in view of \eqref{d1a} we have 
$$\lim_{t\to+\infty}\norm{ v(t,\cdot)}_{H^1(B_{R_0})}=0.$$
Therefore, we obtain 
$$u^{(0)}=\int_0^{+\infty}u(t,\cdot)|_{B_{R_0}}dt=\lim_{t\to+\infty}v(t,\cdot)|_{B_{R_0}}\equiv0.$$
This proves that $u^{(0)}\equiv 0$ and we only need to show the remaining identities in  \eqref{l1b}.
Using the fact that $p\mapsto\hat{u}(p,\cdot)|_{B_{R_0}}\in \mathcal H(\mathbb C_{-\delta};H^1(B_{R_0}))$, we deduce that the map $$p\mapsto \mathcal A \hat{u}(p,\cdot)+c^{-2}p^2\hat{u}(p,\cdot)-pc^{-2}f$$ is lying in $\mathcal H(\mathbb D_\delta;H^{-1}(B_{R_0}))$ and  from \eqref{eq2} we find that
\begin{equation}\label{eq3}\mathcal A \hat{u}(p,x)+c^{-2}(x)p^2\hat{u}(p,x)= pc^{-2}(x)f(x),\quad x\in B_{R_0},\ p\in\mathbb D_\delta.\end{equation}
Inserting the expression \eqref{l1a} into \eqref{eq3} we obtain \eqref{l1b}.\end{proof}

Now let us consider, for $j=1,2$,  $f_j\in H^1(\R^3)$ and let $c_j\in L^\infty(\R^3)$ be  an admissible sound speed coefficient, such that supp$(f_j)\cup$  supp$(c_1-c_2)\subset\overline{\Omega}$ and $c_j$ is constant on  $\R^3\setminus\overline{\Omega}$. Using the above properties, we obtain the following.

\begin{lem}\label{l2} For $j=1,2$, let $f_j\in H^1(\R^3)$ be non-uniformly vanishing and let $c_j\in L^\infty(\R^3)$ be  an admissible sound speed coefficient, such that supp$(f_j)\cup$supp$(c_1-c_2)\subset\overline{\Omega}$ and $c_j$ is constant on  $\R^3\setminus\overline{\Omega}$. Let $u_j$ be the solution of \eqref{eq1} with $c=c_j$ and $f=f_j$.
Assuming that the condition
\begin{equation}\label{l2b}u_1(t,x)=u_2(t,x),\quad (t,x)\in\R_+\times\partial\Omega\end{equation}
 is fulfilled, for any $\phi\in H^1(\Omega)$ satisfying $\mathcal A \phi=0$ in $\Omega$, we obtain the following identities
\begin{equation}\label{l2c} \int_{\Omega}(c^{-2}_1f_1-c^{-2}_2f_2)\phi dx= \int_{\Omega}(c^{-2}_1u^{(k)}_1-c^{-2}_2u^{(k)}_2)\phi dx=0,\quad k\in\mathbb N.\end{equation}

\end{lem}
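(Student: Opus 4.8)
The plan is to pass to the Laplace transform in time and combine three ingredients: the elliptic equation \eqref{eq2}, a uniqueness statement in the unbounded region $\R^3\setminus\overline\Omega$ that upgrades the single boundary condition \eqref{l2b} into the matching of the \emph{full} Cauchy data of $\hat u_1$ and $\hat u_2$ on $\partial\Omega$, and finally Green's formula against the test function $\phi$, whose boundary contributions will be annihilated precisely by that Cauchy data matching. Write $\hat w:=\hat u_1-\hat u_2$; for $p\in\mathbb C_+$ this lies in $H^1(\R^3)$ and, since $c_1=c_2=c_0$ and $f_1=f_2=0$ outside $\overline\Omega$, it solves by \eqref{eq2}
\begin{equation*}
\mathcal A\hat w + c_0^{-2}p^2\hat w = 0 \quad\text{in } \R^3\setminus\overline\Omega .
\end{equation*}
Taking the Laplace transform of \eqref{l2b} gives $\hat w(p,\cdot)|_{\partial\Omega}=0$ for every $p\in\mathbb C_+$, so the zero extension of $\hat w|_{\R^3\setminus\overline\Omega}$ is an admissible $H^1$ test function supported in the exterior.

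The central step, and the one I expect to require the most care, is the exterior uniqueness forcing $\hat w$ to vanish on all of $\R^3\setminus\overline\Omega$. I would test the (weak form of the) exterior equation against $\hat w$ itself; since $\hat w$ has zero trace on $\partial\Omega$ there is no boundary term, leaving
\begin{equation*}
\int_{\R^3\setminus\overline\Omega} a\nabla\hat w\cdot\nabla\overline{\hat w}\,\d x + c_0^{-2}p^2\int_{\R^3\setminus\overline\Omega}|\hat w|^2\,\d x=0 .
\end{equation*}
The first integral is real and, by the ellipticity \eqref{a1} together with the symmetry and realness of $a$, nonnegative. Taking imaginary parts yields $c_0^{-2}\,\im(p^2)\,\|\hat w\|_{L^2}^2=0$, so $\hat w=0$ whenever $\im(p^2)\neq0$; the only points of $\mathbb C_+$ with $p^2$ real are $p>0$, where both nonnegative terms must then vanish separately, again giving $\hat w=0$. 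Hence $\hat u_1=\hat u_2$ on $\R^3\setminus\overline\Omega$ for all $p\in\mathbb C_+$. Because each $\hat u_j$ solves \eqref{eq2} globally on $\R^3$ with right-hand side in $L^2$, there is no surface source across $\partial\Omega$, so the interior and exterior conormal traces of $\hat u_j$ coincide; combined with $\hat u_1=\hat u_2$ in the exterior this gives the matching of the full interior Cauchy data, $\hat u_1=\hat u_2$ and $\partial_\nu^{\mathcal A}\hat u_1=\partial_\nu^{\mathcal A}\hat u_2$ on $\partial\Omega$, for all $p\in\mathbb C_+$.

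With the Cauchy data matched I would restrict \eqref{eq3} to $\Omega$ and apply the weak Green formula for $\mathcal A$ against $\phi$: since $\mathcal A\phi=0$ in $\Omega$, and both the Dirichlet and the conormal traces of $\hat u_1-\hat u_2$ agree on $\partial\Omega$, every boundary pairing cancels and $\int_\Omega\mathcal A(\hat u_1-\hat u_2)\phi\,\d x=0$. Rewriting $\mathcal A\hat u_j=pc_j^{-2}f_j-c_j^{-2}p^2\hat u_j$ turns this into
\begin{equation*}
p\int_\Omega(c_1^{-2}f_1-c_2^{-2}f_2)\phi\,\d x \;=\; p^2\int_\Omega(c_1^{-2}\hat u_1-c_2^{-2}\hat u_2)\phi\,\d x,\qquad p\in\mathbb C_+ .
\end{equation*}
Both sides are holomorphic on $\mathbb D_\delta$ — the left is a monomial in $p$, and the right inherits analyticity from Lemma \ref{l1} since $v\mapsto\int_\Omega c_j^{-2}v\phi\,\d x$ is a bounded functional on $H^1(\Omega)$ — so the identity persists on $\mathbb D_\delta$ by analytic continuation. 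Substituting the expansion $\hat u_j|_\Omega=\sum_{k\geq0}u_j^{(k)}p^k$ from \eqref{l1a} and matching powers of $p$ then extracts both conclusions: the coefficient of $p^1$ gives $\int_\Omega(c_1^{-2}f_1-c_2^{-2}f_2)\phi\,\d x=0$, while the coefficient of $p^{k+2}$ gives $\int_\Omega(c_1^{-2}u_1^{(k)}-c_2^{-2}u_2^{(k)})\phi\,\d x=0$ for every $k\geq0$, which is exactly \eqref{l2c}.
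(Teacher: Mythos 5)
Your proof is correct, but it implements the key step by genuinely different means than the paper, so a comparison is worthwhile. Both arguments share the same skeleton: show that $u_1-u_2$ (or its Laplace transform) vanishes outside $\overline{\Omega}$, deduce that the relevant differences have vanishing Cauchy data on $\partial\Omega$, and then pair with $\phi$ via Green's identity. The paper, however, obtains the exterior vanishing in the \emph{time domain}: $u=u_1-u_2$ restricted to $\R_+\times(\R^3\setminus\overline{\Omega})$ solves the exterior wave equation with zero initial data and, by \eqref{l2b}, zero Dirichlet data, so uniqueness for the hyperbolic initial-boundary value problem gives $u\equiv0$ there; the definition \eqref{l2aa} then yields $u_1^{(k)}=u_2^{(k)}$ on $B_{R_0}\setminus\overline{\Omega}$, and the identities \eqref{l2c} are read off term by term from the elliptic problems \eqref{l1b} satisfied by each difference $u^{(k)}$. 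You instead work at fixed frequency: for each $p\in\mathbb C_+$ you kill $\hat{u}_1-\hat{u}_2$ in the exterior by an elliptic energy argument (the imaginary-part trick, which works precisely because you are on the Laplace side, where $\re p>0$ keeps $p^2$ off the negative real axis; the same argument would fail for the Fourier transform, which is exactly the resonance regime), then match conormal traces across $\partial\Omega$, apply Green's identity at fixed $p$, and finally recover \eqref{l2c} by analytic continuation to $\mathbb D_\delta$ and matching power-series coefficients. What the paper's route buys is brevity and a pointwise-in-time statement ($u_1=u_2$ on the whole exterior) that is reused elsewhere (e.g.\ in the proof of Theorem \ref{t2}); what yours buys is independence from uniqueness theory for hyperbolic initial-boundary value problems with $L^\infty$ sound speed --- everything is reduced to elliptic arguments --- at the cost of the extra conormal-matching and coefficient-matching steps. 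Two small points you should make explicit: the trace on $\partial\Omega$ commutes with the Laplace integral (true, since the Bochner integral converges in $H^1(\R^3)$ and the trace operator is bounded), and the zero-trace restriction of $\hat{u}_1-\hat{u}_2$ to the exterior lies in $H^1_0(\R^3\setminus\overline{\Omega})$ (valid for Lipschitz $\partial\Omega$), which is what licenses testing the exterior equation against its own conjugate.
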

\begin{proof}
In view of Lemma \ref{l1}, fixing
\bel{l2aa}u_j^{(k)}(x)=\frac{(-1)^k\int_0^{+\infty}t^ku_j(t,x)dt}{k!},\quad x\in B_{R_0},\ k\in\mathbb N,\ j=1,2,\ee
we get
\begin{equation}\label{l2a}\hat{u_j}(p,\cdot)|_{B_{R_0}}=\sum_{k=1}^\infty u_j^{(k)}p^k,\quad p\in\mathbb D_\delta.\end{equation}
Now let us consider $u=u_1-u_2$ and notice that the restriction of $u$ to $\R_+\times (\R^3\setminus\overline{\Omega})$ solves the initial boundary value problem
$$\left\{\begin{array}{ll}c_1^{-2}(x)\partial_t^2u+\mathcal A u=0,\quad &\textrm{in}\ \R_+\times(\R^3\setminus\overline{\Omega}),\\  u(0,x)=0,\quad \partial_tu(0,x)=0,&x\in \R^3\setminus\overline{\Omega}\\
u(t,x)=0, &(t,x)\in \R_+\times\partial\Omega .\end{array}\right.$$
Then, the uniqueness of the solution of this initial boundary value problem implies that $u=0$ on $\R_+\times (\R^3\setminus\overline{\Omega})$ and it follows that $u_1=u_2$ on $\R_+\times (\R^3\setminus\overline{\Omega})$. Applying \eqref{l2aa}, we obtain
$$u_1^{(k)}(x)=\frac{(-1)^k\int_0^{+\infty}t^ku_1(t,x)dt}{k!}=\frac{(-1)^k\int_0^{+\infty}t^ku_2(t,x)dt}{k!}=u_2^{(k)}(x),\quad x\in B_{R_0}\setminus\overline{\Omega},\ k\in\mathbb N.$$  Therefore, applying \eqref{l1b} and fixing $u^{(k)}=u_1^{(k)}-u_2^{(k)}$, $k\in\mathbb N$, we get
$$\left\{\begin{array}{ll}\mathcal A u^{(1)}=c^{-2}_1f_1-c^{-2}_2f_2,\quad &\textrm{in}\ B_{R_0},\\  
u^{(1)}(x)=0, &x\in B_{R_0}\setminus\overline{\Omega} ,\end{array}\right.$$
and, for all $k\geq2$, 
\begin{equation}\label{l2e}\left\{\begin{array}{ll}\mathcal A u^{(k)}=c^{-2}_2u_2^{(k-2)}-c^{-2}_1u_1^{(k-2)},\quad &\textrm{in}\ B_{R_0},\\  
u^{(k)}(x)=0, &x\in B_{R_0}\setminus\overline{\Omega} .\end{array}\right.\end{equation}
Fixing $\phi\in H^1(\Omega)$, satisfying $\mathcal A \phi=0$ in $\Omega$, multiplying each of these equations by $\phi$  and integrating by parts, we obtain \eqref{l2c}.\end{proof}

\section{Proof of Corollary \ref{t1}}

This section is devoted to the proof of Corollary \ref{t1}. We start by considering the following intermediate result.

\begin{prop}\label{p1} Let  $f\in H^1(\R^3)$  and let $c\in L^\infty(\R^3)$ be  an admissible sound speed coefficient, such that  $c$ is constant on  $\R^3\setminus\overline{\Omega}$. Let $u$ be the solution of \eqref{eq1}. Then, the condition
\bel{p1a} \int_0^{+\infty}t^{2k}u(t,x)dt=0,\quad x\in B_{R_0},\ k\in\mathbb N\ee
implies that $f\equiv0$.\end{prop}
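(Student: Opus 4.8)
The plan is to read \eqref{p1a} as a parity condition on the Laplace transform, use it in the constant-coefficient exterior to trap the wave inside $\overline\Omega$, and then close with the local energy decay built into admissibility.

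First I would reformulate the hypothesis. By Lemma \ref{l1}, on $\mathbb D_\delta$ one has $\hat u(p,\cdot)|_{B_{R_0}}=\sum_{k\geq0}u^{(k)}p^k$ with $u^{(k)}=\frac{(-1)^k}{k!}\int_0^{+\infty}t^ku(t,\cdot)dt$, so \eqref{p1a} says precisely that every even coefficient $u^{(2k)}$ vanishes. Hence $p\mapsto\hat u(p,\cdot)|_{B_{R_0}}$ is odd on $\mathbb D_\delta$, and since by Lemma \ref{l1} it belongs to $\mathcal H(\mathbb C_{-\delta};H^1(B_{R_0}))$, the relation $\hat u(-p,\cdot)|_{B_{R_0}}=-\hat u(p,\cdot)|_{B_{R_0}}$ extends by analytic continuation to the whole strip $\{\abs{\re p}<\delta\}$.

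Next I would exploit the exterior structure. We may take $R_0\geq R_1$ and fix $R'$ with $\max(R_1,\sup_{x\in\overline\Omega}\abs x)<R'<R_0$; on the shell $\{R'<\abs x<R_0\}$ one has $\mathcal A=-\Delta$, $c\equiv c_0$ and $f\equiv0$, so by \eqref{eq2} the function $\hat u(p,\cdot)$ solves $(-\Delta+c_0^{-2}p^2)\hat u(p,\cdot)=0$ there. Separating variables gives $\hat u(p,\cdot)=\sum_{l,m}\bigl(\alpha_{lm}(p)i_l(c_0^{-1}pr)+\beta_{lm}(p)k_l(c_0^{-1}pr)\bigr)Y_{lm}$, with $\alpha_{lm},\beta_{lm}$ depending holomorphically on $p$. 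For $p\in\mathbb C_+$ the genuine solution $\hat u(p,\cdot)\in H^1(\R^3)$ is square integrable near infinity, which excludes the exponentially growing modes $i_l(c_0^{-1}pr)\sim e^{c_0^{-1}pr}$; thus $\alpha_{lm}\equiv0$ on $\mathbb C_+$, hence on $\mathbb C_{-\delta}$. Inserting $\hat u(q,\cdot)=\sum_{l,m}\beta_{lm}(q)k_l(c_0^{-1}qr)Y_{lm}$ into the oddness relation and using the connection formula $k_l(-w)=A_lk_l(w)+B_li_l(w)$ with $B_l\neq0$ (valid because $k_l(-w)$ grows like $e^{w}$ and so cannot be proportional to the decaying $k_l(w)$ alone), the $i_l$-components must match: the left-hand side contributes $\beta_{lm}(-p)B_li_l$ and the right-hand side nothing, whence $\beta_{lm}\equiv0$ as well. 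Therefore $\hat u(p,\cdot)=0$ on $\{\abs x>R'\}$ for every $p\in\mathbb C_+$.

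Finally, injectivity of the Laplace transform turns this into $u(t,\cdot)=0$ on $\{\abs x>R'\}$ for all $t>0$, so $u(t,\cdot)$ stays supported in $\overline{B_{R'}}\subset B_{R_0}$. The Hermitian energy $E(t)=\half\int_{\R^3}\bigl(c^{-2}\abs{\partial_tu}^2+\sum_{i,j=1}^3a_{i,j}\partial_{x_i}u\,\overline{\partial_{x_j}u}\bigr)dx$ is conserved, while the local energy decay \eqref{d1a} with $R=R_0$ forces the energy on $B_{R_0}$, which by the support property equals $E(t)$, to tend to $0$. Hence $E(0)=\half\int_{\R^3}\sum_{i,j}a_{i,j}\partial_{x_i}f\,\overline{\partial_{x_j}f}\,dx=0$, and by the ellipticity \eqref{a1} this gives $\nabla f=0$; as $f$ is compactly supported, $f\equiv0$.

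I expect the exterior analysis to be the crux: justifying that the separation-of-variables coefficients are holomorphic in $p$, that only the decaying modes survive for $p\in\mathbb C_+$, and above all keeping track of the branch of the modified Bessel functions so that $B_l$ is genuinely nonzero. Once the parity has been used to confine the wave to $\Omega$, the reformulation of the hypothesis and the closing conservation/decay argument are comparatively routine.
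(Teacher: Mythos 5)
Your proof is essentially correct, but it takes a genuinely different route from the paper's after the shared first step, and it contains one unjustified reduction that needs repair.

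Both arguments begin identically: by Lemma \ref{l1}, hypothesis \eqref{p1a} forces every even coefficient $u^{(2k)}$ to vanish, so $p\mapsto\hat u(p,\cdot)|_{B_{R_0}}$ is odd on $\mathbb{D}_\delta$ and, by analytic continuation, on the strip $\{|\re\, p|<\delta\}$. From there the paper never leaves $B_{R_0}$: it glues $\hat u(p,\cdot)|_{B_{R_0}}$ and $-\hat u(-p,\cdot)|_{B_{R_0}}$ into an entire $H^1(B_{R_0})$-valued function, bounds it on all of $\C$ using \eqref{d1a}, and applies the vector-valued Liouville theorem; the transform is then constant in $p$, and uniqueness of the Laplace transform together with continuity of $u$ in time forces $u\equiv0$ on $B_{R_0}$, whence $f=u(0,\cdot)|_{B_{R_0}}\equiv0$ directly, with no energy argument. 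You instead use the oddness in the constant-coefficient exterior: the modified spherical Bessel expansion, the exclusion of the growing modes $i_l$ for $p\in\mathbb C_+$, and the connection formula $k_l(-w)=A_lk_l(w)+B_li_l(w)$ with $B_l\neq0$ kill the transform outside a ball; Laplace inversion plus energy conservation and \eqref{d1a} then finish. Your worry about branches is actually unfounded: for half-integer order these functions are elementary (meromorphic in $w$ with a pole only at $w=0$), so $k_l(-w)$ is unambiguous and your growth argument for $B_l\neq0$ is sound. What the paper's route buys is economy and robustness: Liouville replaces the entire mode analysis, no spherical harmonics or energy conservation are needed, and no geometric relation between $B_{R_0}$ and the region where $\mathcal A=-\Delta$ is ever used.

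The gap is the opening claim ``we may take $R_0\geq R_1$.'' The radius $R_0$ is fixed in the paper only by $\overline\Omega\subset B_{R_0}$, and nothing excludes $R_1>R_0$, i.e.\ the coefficients $a_{i,j}$ may differ from $\delta_{ij}$ on all of $B_{R_0}\setminus\overline\Omega$. Since hypothesis \eqref{p1a} is tied to the specific ball $B_{R_0}$, enlarging $R_0$ strengthens the hypothesis, so this is not a ``without loss of generality'': in the case $R_1\geq R_0$ your shell $\{R'<|x|<R_0\}$ on which $\mathcal A=-\Delta$, $c\equiv c_0$, $f\equiv 0$ simply does not exist, and the oddness, which you know only on $B_{R_0}$, cannot be fed into the Bessel expansion. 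The step can be repaired: run Lemma \ref{l1} on a larger ball $B_R$ with $R>\max(R_0,R_1)$ (admissibility provides \eqref{d1a} for every $R\geq R_0$, possibly with a smaller $\delta$), and propagate the vanishing of the even moments outward by induction using \eqref{l1b}: one has $\mathcal A u^{(2)}=-c^{-2}u^{(0)}=0$ on $B_R$ and $u^{(2)}=0$ on the open set $B_{R_0}$, so unique continuation for the elliptic operator $\mathcal A$ gives $u^{(2)}\equiv0$ on $B_R$; iterating, $u^{(2k)}\equiv0$ on $B_R$ for all $k$, so the parity holds on $B_R$ and your shell argument applies inside $B_R$. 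With that patch, the rest of your proof goes through.
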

\begin{proof}
In view of Lemma \ref{l1}, condition \eqref{l1a} is fulfilled, we have $u^{(0)}\equiv0$ and
$$u^{(2k)}(x)=\frac{\partial_p^{2k}\hat{u}(p,x)|_{p=0}}{(2k)!}=\frac{\int_0^{+\infty}t^{2k}u(t,x)dt}{(2k)!},\quad x\in B_{R_0},\ k\in\mathbb N.$$
Then condition \eqref{p1a} implies that $u^{(2k)}\equiv0$, $k\in\mathbb N\cup\{0\}$ which implies that
$$\hat{u}(p,\cdot)|_{B_{R_0}}=\sum_{k=0}^\infty u^{(2k+1)}p^{2k+1},\quad p\in\mathbb D_\delta.$$
In particular, we obtain
$$\hat{u}(-p,\cdot)|_{B_{R_0}}=-\hat{u}(p,\cdot)|_{B_{R_0}},\quad p\in\mathbb D_\delta.$$
Using the fact that $p\mapsto\hat{u}(p,\cdot)|_{B_{R_0}}\in\mathcal H(\mathbb C_{-\delta};H^1(B_{R_0}))$, we deduce that
\bel{l3b}\hat{u}(-p,\cdot)|_{B_{R_0}}=-\hat{u}(p,\cdot)|_{B_{R_0}},\quad p\in\{z\in \mathbb C_{-\delta}:\ \re(z)\leq0\}.\ee
Now let us consider the map $v:\mathbb C\longrightarrow H^1(B_{R_0})$, defined by 
$$v(p,\cdot):=\left\{\begin{array}{l} \hat{u}(p,\cdot)|_{B_{R_0}}\ \textrm{if }p\in\mathbb C_+\\ -\hat{u}(-p,\cdot)|_{B_{R_0}}\ \textrm{if }p\in\mathbb C\setminus\mathbb C_+.\end{array}\right.$$
Combining \eqref{l3b} with the fact that $p\mapsto\hat{u}(p,\cdot)|_{B_{R_0}}\in\mathcal H(\mathbb C_{-\delta};H^1(B_{R_0}))$, we deduce that $p\mapsto v(p,\cdot)\in\mathcal H(\mathbb C;H^1(B_{R_0}))$ and it is the holomorphic extension of $\hat{u}(p,\cdot)|_{B_{R_0}}$ in the complex plan. Moreover, for all $p\in\overline{\mathbb C_+}$, we have the following estimate
\bel{l3g}\begin{aligned}\norm{v(p,\cdot)}_{H^1(B_{R_0})}+\norm{v(-p,\cdot)}_{H^1(B_{R_0})}&\leq 2\norm{\hat{u}(p,\cdot)}_{H^1(B_{R_0})}\\
&\leq C\int_0^{+\infty}e^{-\delta t}dt\norm{f}_{H^1(\R^3)}\leq C\norm{f}_{H^1(\R^3)},\end{aligned}\ee
with $C>0$ independent of $p$. Therefore, applying Liouville theorem we deduce that there exists $g\in H^1(B_{R_0})$ such that
$$v(p,x)=g(x),\quad x\in B_{R_0},\ p\in\mathbb C.$$
It follows that 
$$\hat{u}(p,x)|_{B_{R_0}}=g(x),\quad x\in B_{R_0},\ p\in\mathbb C_+$$
and the uniqueness of the Laplace transform implies that
$$u(t,x)=\delta_0(t)\otimes g(x),\quad x\in B_{R_0},\ t\in\mathbb R_+,$$
with $\delta_0$ the delta Dirac distribution. Combining this with the fact that $u\in C([0,+\infty);H^1(\R^3))$ we deduce that
$$u(t,x)=0,\quad x\in B_{R_0},\ t\in[0,+\infty).$$
Then, we find $f|_{B_{R_0}}=u(0,\cdot)|_{B_{R_0}}\equiv0$ and using the fact that supp$(f)\subset B_{R_0}$, we deduce that $f\equiv 0$. This  completes the proof of Proposition \ref{p1}.\end{proof}

Armed with Proposition \ref{p1} we are now in position to complete the proof of Corollary \ref{t1}.

\textbf{Proof of Corollary \ref{t1}.} We use the notation of Lemma \ref{l2} and without loss of generality we assume that $c_1\leq c_2$. We will prove that  \eqref{l2b} implies that $c_1=c_2$ and $f_1=f_2$. Note first that
$$u_1^{(2k)}(x)=\frac{\partial_p^{2k}\hat{u_1}(p,x)|_{p=0}}{(2k)!}=\frac{\int_0^{+\infty}t^{2k}u_1(t,x)dt}{(2k)!},\quad x\in B_{R_0}.$$
Then, since $f_1\not\equiv0$, we deduce from Proposition \ref{p1} that
\bel{E}E=\{k\in\mathbb N:\ u_1^{(2k)}|_{B_{R_0}}\not\equiv 0\}\neq\emptyset.\ee
We fix $k_0=\min E$ and we will complete the proof by considering the two different situation $k_0=1$ and $k_0\geq 2$.\\

\textbf{Case 1: $k_0=1$}. Following Lemma \ref{l1} and \ref{l2}, fixing $u^{(2)}=u_1^{(2)}-u_2^{(2)}$ we deduce that $u^{(2)}\in H^1(B_{R_0})$ and it satisfies $$\left\{\begin{array}{ll}\mathcal A u^{(2)}=0,\quad &\textrm{in}\ B_{R_0},\\  
u^{(2)}(x)=0, &x\in B_{R_0}\setminus\overline{\Omega} .\end{array}\right.$$ Therefore, applying results of unique continuation for elliptic equations we deduce that $u^{(2)}\equiv 0$ which implies that $u_1^{(2)}=u_2^{(2)}$ in $B_{R_0}$. Now applying \eqref{l2c} with $\phi=\overline{u_1^{(2)}}$ we obtain
$$0=\int_{\Omega}(c^{-2}_1u^{(2)}_1-c^{-2}_2u^{(2)}_2)\overline{u_1^{(2)}} dx=\int_{\Omega}(c^{-2}_1-c^{-2}_2)|u^{(2)}_1|^2 dx.$$
Combining this with the fact that $c_1\leq c_2$, we obtain $(c^{-2}_1-c^{-2}_2)|u^{(2)}_1|^2=0$ in $\Omega$ and, in view of \eqref{c}, we find
\begin{equation}\label{t1e}(c_2(x)-c_1(x))|u^{(2)}_1(x)|^2=0,\quad x\in\Omega.\end{equation}
We will show that this condition implies that $c_1=c_2$ in $\Omega$. Assuming the contrary and applying the fact that $c_1-c_2\in  C(\overline{\Omega})$, we deduce that the set 
$$\mathcal K:=\{x\in\Omega:\ c_2(x)-c_1(x)>0\}$$
is an open and not empty set of $\R^3$. Then, recalling that $\mathcal A u^{(2)}_1=0$  in $B_{R_0}$ with $\overline{\Omega}\subset B_{R_0}$,  we deduce that $u^{(2)}_1\in C(\overline{\Omega})$ and \eqref{t1e} implies that $u^{(2)}_1=0$ on $\mathcal K$. Thus the unique continuation principle implies that $u^{(2)}_1=0$ on $B_{R_0}$, contradicting the fact that $u_1^{(2)}|_{B_{R_0}}\not\equiv 0$. Therefore, we have $c_1=c_2$.

\textbf{Case 2: $k_0\geq2$}. In this case, we have 
$$u_1^{(0)}|_{B_{R_0}}=\ldots=u_1^{(2(k_0-1))}|_{B_{R_0}}\equiv 0.$$
In a similar way to case 1, we can prove that $u_2^{(0)}|_{B_{R_0}}=u_1^{(0)}|_{B_{R_0}}\equiv0$ and by iteration, we deduce that 
$u_2^{(0)}|_{B_{R_0}}=u_2^{(2)}|_{B_{R_0}}=\ldots=u_2^{(2(k_0-1))}|_{B_{R_0}}\equiv 0$. It follows that
\begin{equation}\label{t1f}u_j^{(0)}|_{B_{R_0}}=u_j^{(2)}|_{B_{R_0}}=\ldots=u_j^{(2(k_0-1))}|_{B_{R_0}}\equiv 0,\quad j=1,2.\end{equation}
 In addition, using \eqref{l2e} and fixing $u^{(2k_0)}=u_1^{(2k_0)}-u_2^{(2k_0)}$ we obtain
$$\left\{\begin{array}{ll}\mathcal A u^{(2k_0)}=0,\quad &\textrm{in}\ B_{R_0},\\  
u^{(2k_0)}(x)=0, &x\in B_{R_0}\setminus\overline{\Omega} .\end{array}\right.$$
Therefore, by unique continuation we deduce that $u_1^{(2k_0)}=u_2^{(2k_0)}$ in $B_{R_0}$. Applying \eqref{l2c} with $\phi=\overline{u_1^{(2k_0)}}$ we obtain
$$0=\int_{\Omega}(c^{-2}_1u^{(2k_0)}_1-c^{-2}_2u^{(2k_0)}_2)\overline{u_1^{(2k_0)}} dx=\int_{\Omega}(c^{-2}_1-c^{-2}_2)|u^{(2k_0)}_1|^2 dx.$$
Then, repeating the arguments of  case 1 we can show by contradiction that $c_1=c_2$.

In both cases we have $c_1=c_2$. In addition, applying  Theorem \ref{t2} of the appendix we deduce that \eqref{l2b} implies also that $f_1=f_2$ which proves that in all cases  \eqref{t1c} holds true. This completes the proof of Corollary \ref{t1}.

\section{Proof of Theorem \ref{t3}}

In this section we will use the notation of the proof of Corollary \ref{t1}. We will prove that  \eqref{l2b} implies that $c_1=c_2$ and $f_1=f_2$. In a similar way to Theorem \ref{t1}, we consider the set $E$ given by \eqref{E} and applying Proposition \ref{p1} we recall that $E$ is not empty. Then we fix $k_0=\min E$ and, in a similar way to Theorem \ref{t1}, one can check that  \eqref{t1f} is fulfilled and $u_1^{(2k_0)}=u_2^{(2k_0)}$ in $B_{R_0}$. Fixing $u^{(2k_0+2)}=u_1^{(2k_0+2)}-u_2^{(2k_0+2)}$ and applying \eqref{l2e} and \eqref{t3c}, we obtain 
$$\left\{\begin{array}{ll}\mathcal A u^{(2k_0+2)}(x)=(c_2^{-2}(x)-c_1^{-2}(x))u_1^{(2k_0)}(x)=0,\quad &x\in \mathcal O,\\  
u^{(2k_0+2)}(x)=0, &x\in B_{R_0}\setminus\overline{\Omega} .\end{array}\right.$$
On the other hand, since $K_j\subset \overline{\Omega}$, $j=1,\ldots,N$, we have $B_{R_0}\setminus\overline{\Omega}\subset \mathcal O$. Combining this with \eqref{t3c} and applying results of unique continuation we deduce that $u^{(2k_0+2)}=0$ on $\mathcal O$.

Now let us fix
$$r_1:=\underset{1\leq j\leq N}{\min} \textrm{dist}(K_j,\partial B_{R_0})>0,\quad r_2:=\underset{1\leq i<j\leq N}{\min} \textrm{dist}(K_i,K_j)>0,\quad r_0:=\min (r_1,r_2).$$
We define also the sets
\bel{t3aa}K_j':=\left\{x:\ \textrm{dist}(x,K_j)\leq \frac{r_0}{6}\right\},\quad U_j:=\left\{x:\ \textrm{dist}(x,K_j)< \frac{r_0}{3}\right\},\quad j=1,\ldots,N.\ee
Then, we consider $\chi_j\in C^\infty_0(U_j)$ satisfying $\chi_j=1$ on $K_j'$ and $0\leq\chi_j\leq 1$, $j=1,\ldots,N$. It is clear that
\bel{t3e} K_i'\cap \textrm{supp}(\chi_j)=\emptyset,\quad i,j=1,\ldots,N,\ i\neq j,\ee
\bel{t3f}  \textrm{supp}(\partial_{x_k}\chi_j)\subset\mathcal O ,\quad j=1,\ldots,N,\ k=1,2,3.\ee

In view of \eqref{t3e}, fixing $j=1,\ldots,N$ and multiplying \eqref{l2e}, with $k=2k_0+2$, by $\chi_j \overline{u_1^{(2k_0)}}$ and integrating by parts we obtain
$$\begin{aligned} &\int_{K_j}(c^{-2}_1-c^{-2}_2)|u^{(2k_0)}_1|^2 dx\\
&=\int_{B_{R_0}}\chi_j(c^{-2}_1-c^{-2}_2)|u^{(2k_0)}_1|^2 dx\\
&=\int_{B_{R_0}}(c^{-2}_1u^{(2k_0)}_1-c^{-2}_2u^{(2k_0)}_2)\chi_j\overline{u_1^{(2k_0)}} dx\\
&=-\int_{B_{R_0}}\mathcal A u^{(2k_0+2)}\chi_j\overline{u_1^{(2k_0)}} dx\\
&=-\int_{B_{R_0}}u^{(2k_0+2)}\left(2\sum_{m,n=1}^3a_{m,n}(x)\overline{\partial_{x_n}u^{(2k_0)}_1}\partial_{x_m}\chi_j+\chi_j\overline{\mathcal A u_1^{(2k_0)}}+\overline{u_1^{(2k_0)}}\mathcal A\chi_j \right)dx\\
&=-\int_{B_{R_0}}u^{(2k_0+2)}\left(2\sum_{m,n=1}^3a_{m,n}(x)\overline{\partial_{x_n}u^{(2k_0)}_1}\partial_{x_m}\chi_j+\overline{u_1^{(2k_0)}}\mathcal A\chi_j \right)dx.\end{aligned}$$
Moreover, using \eqref{t3f} and the fact that $u^{(2k_0+2)}=0$ on $\mathcal O$, we get
$$\int_{K_j}(c^{-2}_1-c^{-2}_2)|u^{(2k_0)}_1|^2 dx=-\int_{\mathcal O}u^{(2k_0+2)}\left(2\sum_{m,n=1}^3a_{mn}(x)\overline{\partial_{x_n}u^{(2k_0)}_1}\partial_{x_m}\chi_j+\overline{u_1^{(2k_0)}}\mathcal A\chi_j \right)dx=0.$$
Combining this with condition \eqref{t3d}, we deduce that
$$(c^{-2}_1(x)-c^{-2}_2(x))|u^{(2k_0)}_1(x)|^2=0,\quad x\in K_j$$
and repeating the arguments of Corollary \ref{t1} we deduce that $c_1=c_2$ on $K_j$. Therefore, we have
$$c_1(x)=c_2(x),\quad x\in \bigcup_{j=1}^N K_j$$
and \eqref{t3c} implies that $c_1=c_2$. Then, applying  Theorem \ref{t2} we find that \eqref{l2b} implies also that $f_1=f_2$ which proves   \eqref{t1c}. This completes the proof of Theorem \ref{t3}.

\section{Proof of Theorem \ref{t5},  Corollary \ref{c1} and Proposition \ref{r1}}
In this section we will use the notation of the proof of Corollary \ref{t1}. We start by considering the proof of Theorem \ref{t5} and Corollary \ref{c1}. Then we prove Proposition \ref{r1} by showing that the conditions of Theorem \ref{t5} are fulfilled when $\mathcal A=-\Delta$ and \eqref{kmm}, with $f=f_1$ and $c=c_1$, is fulfilled.  We prove the same result when \eqref{kmm} is not fulfilled but the function $h$ given by \eqref{h} is non-uniformly vanishing and of constant sign and we give a more general formulation of such conditions.

\textbf{Proof of Theorem \ref{t5}.} We will prove that  \eqref{l2b} implies that $c_1=c_2$ and $f_1=f_2$. Let us  first observe that since, for $j=1,2$, $f_j$ is real valued, we deduce that $u_j$ is real valued and by definition the functions $u_j^{(k)}$, $k\in\mathbb N$, are also real valued. In view of Proposition \ref{p1}, since $f_1\not\equiv0$, the set $\mathcal B$ defined by \eqref{t5a} is not empty and therefore we can fix $k_0=\min \mathcal B$. Then,  we complete the proof by considering the two different situation $k_0=1$ and $k_0\geq 2$.\\
\ \\
\textbf{Case 1:} $k_0=1$. Using the fact that
$$u_1^{(2)}(x)=\frac{\int_0^{+\infty}t^2u_1(t,x)dt}{2},\quad x\in B_{R_0},$$
we deduce that $u^{(2)}_1$ solves the boundary value problem
$$\left\{\begin{array}{ll}\mathcal A u^{(2)}_1=0,\quad &\textrm{in}\ \Omega,\\  
u^{(2)}_1(x)=\frac{g(x)}{2}, &x\in \partial\Omega ,\end{array}\right.$$
with $g$ given by \eqref{g}. Since $g$ is of constant sign the maximum principle implies that $u^{(2)}_1$ is also of constant sign and repeating the arguments used in the proof of Corollary \ref{t1} we can show that $u^{(2)}_1=u^{(2)}_2$. Combining this with \eqref{t5b} and Lemma \ref{l2}, we deduce that
$$\begin{aligned}0=\int_{\Omega}(c^{-2}_1u^{(2)}_1-c^{-2}_2u^{(2)}_2)\psi dx&=\int_{\Omega}(c^{-2}_1-c^{-2}_2)u^{(2)}_1\psi dx\\
\ &=\int_{\Omega}\left(\frac{(c_1+c_2)(c_2-c_1)}{c^{2}_1c^{2}_2}\right)u^{(2)}_1\psi dx\\
&=-\int_{\Omega}\left(\frac{(c_1+c_2)u^{(2)}_1}{c^{2}_1c^{2}_2}\right)\psi^2h dx.\end{aligned}$$
Combining this with the fact that $u^{(2)}_1$ and $h$ are of constant sign, we deduce that
$$\left(\frac{(c_1+c_2)u^{(2)}_1}{c^{2}_1c^{2}_2}\right)\psi^2h(x)=0,\quad x\in\Omega$$
which implies that
\bel{t5d}u^{(2)}_1(x)\psi^2(x)h(x)=0,\quad x\in\Omega.\ee
From now on we will use this identity for proving that $c_1=c_2$. It is clear that if $h=0$ on $\Omega$ we have $c_1=c_2$. So let us assume the contrary. Then we can define the open and non-empty set $\mathcal U:=\{x\in\Omega:\ |h(x)|>0\}$. In view of \eqref{t5d}, we have
$$u^{(2)}_1(x)\psi^2(x)=0,\quad x\in\mathcal U.$$
Recalling that $\mathcal A u^{(2)}_1=0$ on $B_{R_0}$ with $\overline{\Omega}\subset B_{R_0}$, we deduce that $u^{(2)}_1\in C^1(\overline{\Omega})$ and since $g\not\equiv0$, we deduce that $u^{(2)}_1|_{\Omega}\not\equiv0$. Applying the strong maximum principle we deduce that $|u^{(2)}_1(x)|>0$, $x\in\Omega$ and it follows that $\psi(x)=0$, $x\in\mathcal U$. Using the fact that $\Omega$ is connected and applying the unique continuation for elliptic equations, we deduce that $\psi=0$ on $\Omega$ which implies that $c_1=c_2$.\\
\ \\
\textbf{Case 2:} $k_0\geq 2$. In that case, we have $u^{(2)}_1\in H^1_0(\Omega)$ and $\mathcal A u^{(2)}_1=0$ on $\Omega$. This implies that $u^{(2)}_1=0$ in $\Omega$ and then by unique continuation on $B_{R_0}$. Repeating the arguments of the first step we deduce that $u^{(2)}_2=u^{(2)}_1\equiv0$. In the same way, applying Lemma \ref{l1} we can prove by iteration that \eqref{t1f} holds true. Combining this with \eqref{l2b} and Lemma \ref{l1}, we deduce that $u^{(2k_0)}_2=u^{(2k_0)}_1$ and $u^{(2k_0)}_1$ solves the boundary value problem
$$\left\{\begin{array}{ll}\mathcal A u^{(2k_0)}_1=0,\quad &\textrm{in}\ \Omega,\\  
u^{(2k_0)}_1(x)=\frac{g(x)}{(2k_0)!}, &x\in \partial\Omega .\end{array}\right.$$
Therefore, applying \eqref{t5b} and Lemma \ref{l2}, we obtain
$$\begin{aligned}0=\int_{\Omega}(c^{-2}_1u^{(2k_0)}_1-c^{-2}_2u^{(2k_0)}_2)\psi dx&=\int_{\Omega}(c^{-2}_1-c^{-2}_2)u^{(2k_0)}_1\psi dx\\
&=-\int_{\Omega}\left(\frac{(c_1+c_2)u^{(2k_0)}_1}{c^{2}_1c^{2}_2}\right)\psi^2h dx.\end{aligned}$$
Thus, repeating the arguments of the first case, we get $c_1=c_2$.

In both cases we have $c_1=c_2$. In addition, applying  Theorem \ref{t2} we deduce that \eqref{l2b} implies also that $f_1=f_2$ which proves that in all case  \eqref{t1c} holds true. This completes the proof of Theorem \ref{t5}.\qed
\ \\

\textbf{Proof of Corollary \ref{c1}.} Consider the set $K_j'$ and $U_j$, $j=1,\ldots,N$, given by \eqref{t3aa} and consider $\chi_j\in C^\infty_0(U_j)$ satisfying $\chi_j=1$ on $K_j'$ and $0\leq\chi_j\leq 1$, $j=1,\ldots,N$. In a similar way to Theorem \ref{t3}, applying \eqref{t3e}-\eqref{t3f}, multiplying \eqref{l2e}, with $k=2k_0+2$, by $\chi_j \psi_j$ and integrating by parts we obtain
$$\int_{K_j}(c^{-2}_1-c^{-2}_2)u^{(2k_0)}_1 \psi_j dx=-\int_{\mathcal O}u^{(2k_0+2)}\left(2\sum_{m,n=1}^3a_{mn}(x)\partial_{x_n}\psi_j\partial_{x_m}\chi_j+\psi_j\mathcal A\chi_j \right)dx=0.$$
Applying \eqref{c1a}, we get
$$\int_{K_j}\left(\frac{(c_1+c_2)u^{(2k_0)}_1}{c^{2}_1c^{2}_2}\right)\psi_j^2h_j dx=0,\quad j=1,\ldots,N.$$
Then repeating the arguments used in the proof of Theorem \ref{t5} we can prove that $\psi_jh_j\equiv0$, $j=1,\ldots,N$. Combining this with \eqref{t3f} and \eqref{c1a}, we  deduce that $c_1=c_2$ and applying Theorem \ref{t2} we deduce that  \eqref{t1c} holds true. This completes the proof of Corollary \ref{c1}.\qed

\ \\
\textbf{Proof of Proposition \ref{r1}.} We use the notation of Corollary \ref{t1}. We will prove Proposition \ref{r1},  in the case $\mathcal A=-\Delta$, by following the analysis of \cite{LU} based on arguments of scattering theory and properties of representation of solutions of Helmholtz equations. For the proof of this proposition, we extend the approach of \cite{KM,LU} to the case of sound speed coefficients which are not equal to $1$ on $\R^3\setminus\overline{\Omega}$ and we give a proof that will combine the properties of Lemma \ref{l1} with some arguments of \cite{KM,LU}. Moreover, we extend the asymptotic extension in low frequency of Fourier transform in time of the solution of \eqref{eq1} considered by \cite{KM,LU} and we prove that when \eqref{kmm} is not fulfilled the sign condition imposed to \eqref{h} implies the condition of sign imposed to \eqref{g} in Theorem \ref{t5} and Corollary \ref{c1}. 

Let us first consider the constant $c_0>0$ such that 
$$c_1(x)=c_0,\quad x\in\R^3\setminus\overline{\Omega}.$$
We fix $v$ defined by
$$v(t,x)=u_1(c_0^{-1}t,x),\quad t\in\R_+,\ x\in\R^3$$
and one can check that $v$ solves \eqref{eq1} with $c=\frac{c_1}{c_0}$ and $f=f_1$. Then, using the fact that $\frac{c_1}{c_0}=1$ on  $\R^3\setminus\overline{\Omega}$ and applying \cite[Lemma 3.1]{LU}, for all $x\in\R^3$ and all $\tau\in\R$, we obtain 
$$\hat{v}(i\tau ,x)=- \tau^2\int_{B_{R_0}}\left(1-\frac{c_0^2}{c_1(y)^2}\right)\hat{v}(i\tau ,y)\Phi_\tau(x-y)dy-i \tau\int_{B_{R_0}}\frac{c_0^2f_1(y)}{c_1(y)^2}\Phi_\tau(x-y)dy,$$
where 
$$\Phi_\tau(x)=\frac{e^{i\tau |x|}}{4\pi |x|},\quad x\in\R^3\setminus\{0\},\ \tau\in\R.$$
On the other hand, we have $\hat{v}(i\tau ,\cdot)=c_0\hat{u_1}(ic_0\tau,\cdot)$ and, fixing $\tau_1=c_0\tau$, it follows that
\bel{r1a}\hat{u_1}(i\tau _1,x)=- \tau_1^2\int_{B_{R_0}}\left(c_0^{-2}-c_1(y)^{-2}\right)\hat{u_1}(i\tau _1,y)\Phi_{c_0^{-1}\tau_1}(x-y)dy-i\tau _1\int_{B_{R_0}}\frac{f_1(y)}{c_1(y)^2}\Phi_{c_0^{-1}\tau_1}(x-y)dy.\ee
Applying Taylor's formula, we  find
\bel{r1b}\abs{\Phi_{c_0^{-1}\tau_1}(x)-\frac{1}{4\pi |x|}-\frac{ic_0^{-1}\tau_1}{4\pi}}\leq \tau_1^2c_0^2|x|\leq 2\tau_1^2c_0^2R_0,\quad \tau_1\in\R,\ x\in B_{2R_0}\setminus\{0\}.\ee
Moreover, applying Lemma \ref{l1}, we deduce that
\bel{r1c}\norm{\hat{u_1}(i\tau _1,\cdot)|_{B_{R_0}} -i\tau _1u_1^{(1)}+\tau_1^2u_1^{(2)}}_{L^2(B_{R_0})}\leq C|\tau_1|^3,\quad \tau_1\in(-\delta/2,\delta/2),\ee
with $C>0$ independent of $\tau_1$ and $\delta>0$ the constant of estimate \eqref{d1a} with $R=R_0$. In addition, for all $x\in B_{R_0}$, we have 
$$\begin{aligned}\abs{\int_{B_{R_0}}\left(1-\frac{c_0^2}{c_1(y)^2}\right)\hat{u_1}(i\tau _1,y)\Phi_{c_0^{-1}\tau_1}(x-y)dy}&\leq C\int_{B_{R_0}}\abs{\hat{u_1}(i\tau _1,y)}\abs{x-y}^{-1}\chi_{B_{2R_0}}(x-y)dy\\
&\leq C(\abs{\hat{u_1}(i\tau _1,x)}\chi_{B_{R_0}}(x))*(\abs{x}^{-1}\chi_{B_{2R_0}}(x)),\end{aligned}$$
with $\chi_{A}$ the characteristic function of the set $A$ and $*$ the convolution product. Therefore, applying Young inequality and \eqref{r1c}, we obtain
$$\begin{aligned}\norm{\int_{B_{R_0}}\left(1-\frac{c_0^2}{c_1(y)^2}\right)\hat{u_1}(i\tau _1,y)\Phi_{c_0^{-1}\tau_1}(\cdot-y)dy}_{L^2(B_{R_0})}&\leq C\left(\int_{B_{2R_0}}\abs{x}^{-1}dx\right)\norm{\hat{u_1}(i\tau _1,\cdot)}_{L^2(B_{R_0})}\\
&\leq C\tau_1,\quad \tau_1\in(-\delta/2,\delta/2),\end{aligned}$$
where $C>0$ is independent of $\tau_1$. Combining this estimate with \eqref{r1a}-\eqref{r1b}, for all $\tau_1\in(-\delta/2,\delta/2)$, we obtain
$$\begin{aligned}&\norm{\hat{u_1}(i\tau _1,\cdot)|_{B_{R_0}} +i\tau _1\int_{B_{R_0}}\frac{f_1(y)}{4\pi c_1(y)^2|\cdot-y|}dy-\tau_1^2\int_{\R^3}\frac{f_1(y)}{4\pi c_0c_1(y)^2}dy}_{L^2(B_{R_0})}\\
&\leq C\tau_1^3,\end{aligned}$$
with $C>0$ independent of $\tau_1$. Then, from \eqref{r1c}, we obtain
\bel{r1e}u_1^{(1)}(x)=-\int_{B_{R_0}}\frac{c_1(y)^{-2}f_1(y)}{4\pi |x-y|}dy,\quad u_1^{(2)}(x)=-\frac{1}{4\pi c_0}\int_{\R^3}\frac{f_1(y)}{c_1(y)^2}dy,\quad x\in B_{R_0}.\ee
Thus, \eqref{kmm} implies that the set $\mathcal B$ given by \eqref{t5a} is not empty, $\min\mathcal B=1$ and $g$, given by \eqref{g} with $k_0=1$, is the following non-vanishing constant function  
$$g(x)=2u_1^{(2)}(x)=-\frac{1}{2\pi c_0}\int_{\R^3}\frac{f_1(y)}{c_1(y)^2}dy,\quad x\in \partial\Omega.$$
This proves that \eqref{kmm} implies the condition on the sign of \eqref{g} appearing in Theorem \ref{t5} and Corollary \ref{c1}. 

Now let us assume that condition \eqref{kmm} is not fulfilled. Then, in view of \eqref{r1e}, we have $u_1^{(2)}\equiv 0$ and
$$\norm{\hat{u_1}(i\tau _1,\cdot)|_{B_{R_0}} -i\tau _1u_1^{(1)}+i\tau _1^3u_1^{(3)}-u_1^{(4)}\tau_1^4}_{L^2(B_{R_0})}\leq C|\tau_1|^5,\quad \tau_1\in(-\delta/2,\delta/2).$$
Repeating the above argumentation, we find
$$u_1^{(4)}(x)=-\frac{1}{4c_0^{3}\pi}\int_{\R^3}|x-y|^2\frac{f_1(y)}{c_1(y)^2}dy+\frac{1}{4c_0\pi}\int_{\R^3}\left(c_0^{-2}-c_1(y)^{-2}\right)u_1^{(1)}(y)dy,\quad x\in B_{R_0},$$
which, combined with \eqref{r1e}, implies that
$$u_1^{(4)}(x)=-\frac{h(x)}{4c_0^{3}\pi},\quad x\in\partial\Omega,$$
with $h$ given by \eqref{h}. Therefore, assuming that $h$ is non-uniformly vanishing we deduce that $\mathcal B\neq\emptyset$, $\min\mathcal B=2$ and $g$, given by \eqref{g} with $k_0=2$, is defined by
$$g(x)=-\frac{6h(x)}{c_0^{3}\pi},\quad x\in\partial\Omega.$$
Then, if $h$ is of constant sign, $g$ will be also of constant sign and we will deduce that this condition implies the condition on the sign of \eqref{g} appearing in Theorem \ref{t5} and Corollary \ref{c1}. 

In the same way, following the proof of Theorem \ref{t5}, we know that $\mathcal B\neq\emptyset$. Therefore, fixing $k_0=\min\mathcal B$ and with $k_0\geq3$, we can prove  by iteration that
$$\begin{aligned}u_1^{(2k_0)}(x)=&-\frac{1}{4c_0^{2k_0-1}\pi}\int_{\R^3}|x-y|^{2(k_0-1)}\frac{f_1(y)}{c_1(y)^2}dy\\
&+\sum_{k=0}^{k_0-2}\frac{1}{4c_0^{2(k_0-k-1)-1}\pi}\int_{\R^3}\left(c_0^{-2}-c_1(y)^{-2}\right)|x-y|^{2(k_0-k-2)}u_1^{(2k+1)}(y)dy,\quad x\in B_{R_0}\end{aligned}$$
and it follows that $g$ given by \eqref{g} takes the form 
\bel{k0}\begin{aligned}g(x)=&-\frac{(2k_0)!}{4c_0^{2k_0-1}\pi}\int_{\R^3}|x-y|^{2(k_0-1)}\frac{f_1(y)}{c_1(y)^2}dy\\
&+\sum_{k=0}^{k_0-2}\frac{(2k_0)!}{4c_0^{2(k_0-k-1)-1}\pi}\int_{\R^3}\left(c_0^{-2}-c_1(y)^{-2}\right)|x-y|^{2(k_0-k-2)}u_1^{(2k+1)}(y)dy,\quad x\in \partial\Omega.\end{aligned}\ee
Therefore, by imposing a sign condition to this function we will be in position to apply the results of Theorem \ref{t5} and Corollary \ref{c1}. Note that the expression \eqref{k0} depends on $c_1$, $f_1$, $u_1^{(2k+1)}$, $k=0,\ldots,k_0-2$. Therefore, repeating the above arguments, we can derive by iteration the explicit expression of \eqref{k0}  in terms of an expression depending only on $c_1$ and $f_1$.

 In light of Theorem \ref{t5} and Corollary \ref{c1}, the condition \eqref{kmm} or the condition on the sign of \eqref{h} when \eqref{kmm} is not fulfilled imply that \eqref{t1c} holds true provided that condition \eqref{t5b} or \eqref{c1a} is fulfilled. According to the above argumentation, one can also extends this approach to the general case $\min\mathcal B=k_0$ for some general integer $k_0\in\mathbb N$.\qed

\section{Connection with the transmission eigenvalues problem}
In this section, we establish a connection between the TAT and PAT problem and the transmission eigenvalues problem investigated by many authors (see e.g. \cite{CCH,CGH,NN,PV, R1,Vo3}). A first connection  between these   problems has already been considered by \cite{FH} and we will give an extension of the result of \cite{FH}.  For this purpose, let us recall the definition of an eigenvalue of the transmission eigenvalues problem associated with \eqref{eq1}.

\begin{Def}\label{d2} A complex number $\tau\in\mathbb C$ is  called an eigenvalue of the transmission eigenvalues problem associated
with $c_1,c_2\in L^\infty (\R^3)$ on $\Omega$ if there is a non-zero pair of functions $(w_1,w_2)\in H^1(B_{R_0})^2$ satisfying the conditions
\bel{d2a}\left\{\begin{array}{ll}\mathcal A w_1-\tau^2c^{-2}_1w_1=0,\quad &\textrm{in}\ \Omega,\\
\mathcal A w_2-\tau^2c^{-2}_2w_2=0,\quad &\textrm{in}\ \Omega,\\  
w_1(x)=w_2(x),\ \partial_{\nu_a}w_1(x)=\partial_{\nu_a}w_2(x), &x\in\partial  \Omega,\end{array}\right.\ee
where $\partial_{\nu_a}$ denotes the conormal derivative associated with the coefficient $a$ of $\partial\Omega$ defined by
$$\partial_{\nu_a}v(x)=\sum_{j=1}^3a_{i,j}(x)\partial_{x_j}v(x)\nu_i(x),\quad x\in\partial\Omega.$$
\end{Def}
Fixing $r>0$ and $c_1,c_2\in L^\infty (\R^3)$, we introduce the set $NTE(c_1,c_2,r)$ of  complex numbers $\tau\in\mathbb C$ such that $\im(\tau)\in(-r,r)$ and $\tau$ is not an eigenvalue of the transmission eigenvalues problem associated with $c_1$ and $c_2$ on $\Omega$. We  establish the following link between our problem and the transmission eigenvalues problem \eqref{d2a}.

\begin{Thm}\label{t4} 
For $j=1,2$, let $f_j\in H^1(\R^3)$ be non-uniformly vanishing and let $c_j\in L^\infty(\R^3)$ be  an admissible sound speed coefficient, such that supp$(f_j)\cup$supp$(c_1-c_2)\subset\overline{\Omega}$ and $c_j$ is constant on  $\R^3\setminus\overline{\Omega}$. Let $u_j$ be the solution of \eqref{eq1} with $f=f_j$ and assume that
\begin{equation}\label{t4b}  u_1(t,x)=u_2(t,x),\quad t\in\R_+,\ x\in\partial\Omega.\end{equation}
We consider also $\delta>0$ such that  \eqref{d1a} holds true, with $R=R_0$, for all solutions of \eqref{tata} with $c=c_j$, $j=1,2$.
Then,  the set $NTE(c_1,c_2,\delta)$ consists of a set of at most discrete and isolated complex numbers $\tau\in\mathbb C$ such that $\im(\tau)\in(-\delta,\delta)$.
\end{Thm}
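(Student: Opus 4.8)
The plan is to transfer the problem to the spectral variable $\tau$ via the Laplace transform and to read the transmission eigenvalue condition off a holomorphic operator family. With $p=-i\tau$, so that $\re p=\im\tau$, Lemma~\ref{l1} makes $w_j(\tau):=\hat{u}_j(-i\tau,\cdot)|_{B_{R_0}}$ holomorphic on the strip $\Sigma:=\{\tau\in\mathbb C:\ |\im\tau|<\delta\}$ with values in $H^1(B_{R_0})$, and \eqref{eq3} reads $\mathcal A w_j(\tau)-\tau^2c_j^{-2}w_j(\tau)=-i\tau c_j^{-2}f_j$ on $B_{R_0}$. First I would argue, exactly as in Lemma~\ref{l2}, that \eqref{t4b} together with exterior uniqueness for the wave equation gives $u_1=u_2$, hence $w_1=w_2$, on $\R_+\times(\R^3\setminus\overline\Omega)$; since also $c_1=c_2$ there, the functions $w_1(\tau),w_2(\tau)$ are the outgoing solutions of one and the same exterior equation and therefore share their Cauchy data on $\partial\Omega$, i.e. $w_1(\tau)=w_2(\tau)$ and $\partial_{\nu_a}w_1(\tau)=\partial_{\nu_a}w_2(\tau)$ for every $\tau\in\Sigma$.

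Next I would encode \eqref{d2a} through the interior Dirichlet-to-Neumann maps $\Lambda_j(\tau)$ of $\mathcal A-\tau^2c_j^{-2}$ on $\Omega$, which form a holomorphic family off the discrete set $S_0\subset\Sigma$ of interior Dirichlet eigenvalues of the two operators. There $\tau$ is a transmission eigenvalue precisely when $N(\tau):=\Lambda_1(\tau)-\Lambda_2(\tau)$ is not injective, so that $\tau\in NTE(c_1,c_2,\delta)$ exactly when $N(\tau)$ is injective. The admissibility of $c_1,c_2$ keeps $w_j$ holomorphic on $\Sigma$ and frees $\Sigma$ of scattering resonances, so the exterior–interior operators $M_j(\tau):=\Lambda_{ext}(\tau)-\Lambda_j(\tau)$ are invertible on $\Sigma\setminus S_0$. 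Writing $R_j(\tau)$ for the conormal trace produced by the source $-i\tau c_j^{-2}f_j$ under zero Dirichlet datum, the two expressions for the common trace $\phi(\tau):=w_1(\tau)|_{\partial\Omega}=w_2(\tau)|_{\partial\Omega}$ give $\phi=M_1(\tau)^{-1}R_1=M_2(\tau)^{-1}R_2$, and subtracting the two interior relations yields the holomorphic identity $N(\tau)\phi(\tau)=R_2(\tau)-R_1(\tau)$ on $\Sigma\setminus S_0$.

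Finally I would invoke analytic Fredholm theory. Casting the homogeneous problem \eqref{d2a} in the standard coupled form for the pair $(w_2,\,w_1-w_2)$, whose second entry has vanishing Cauchy data, produces a holomorphic family $\mathcal T(\tau)$ of Fredholm operators of index zero on $\Sigma\setminus S_0$ whose non-invertibility is equivalent to the eigenvalue condition; by the analytic Fredholm theorem the invertible set, which is exactly $NTE(c_1,c_2,\delta)\cap(\Sigma\setminus S_0)$, is either co-discrete in $\Sigma\setminus S_0$ or empty. Since the co-discrete alternative is incompatible with the assertion, the crux is to show the eigenvalue set is not discrete: the global matched solution $w_j$ — equivalently the identity $N(\tau)\phi(\tau)=R_2(\tau)-R_1(\tau)$ propagated by analyticity together with the moment relations \eqref{l2c}, and with $f_j$ non-uniformly vanishing (so $w_1\not\equiv0$ by Proposition~\ref{p1}) — forces $\mathcal T(\tau)$ to be singular along a subset of $\Sigma$ with an accumulation point, whereupon analytic Fredholm upgrades singularity to all of $\Sigma\setminus S_0$; thus $NTE(c_1,c_2,\delta)\subset S_0$ is at most discrete and isolated. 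I expect this last step to be the main obstacle: because no sign condition is imposed on $c_1-c_2$ (indeed $c_1=c_2$, where every $\tau$ is trivially an eigenvalue, is allowed and must be handled directly), $N(\tau)$ is only of negative order rather than elliptic, so both the index-zero reformulation and the production of a non-discrete family of eigenvalues from the matched Cauchy data demand care.
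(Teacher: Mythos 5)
There is a genuine gap, and it sits exactly where you yourself flag ``the crux.'' The heart of the paper's proof is a symmetrization trick that your proposal misses: instead of taking $w_j(\tau):=\hat{u}_j(-i\tau,\cdot)|_{B_{R_0}}$ alone, one must take
$$w_j(\tau,\cdot):=\hat{u}_j(i\tau,\cdot)|_{B_{R_0}}+\hat{u}_j(-i\tau,\cdot)|_{B_{R_0}},$$
because by \eqref{eq2} the two source terms $\pm i\tau c_j^{-2}f_j$ cancel, so that $w_j(\tau,\cdot)$ solves the \emph{homogeneous} equation $\mathcal A w_j-\tau^2c_j^{-2}w_j=0$ in $B_{R_0}$. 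Combined with the exterior-uniqueness argument (your first paragraph, which is correct and matches Lemma~\ref{l2}) giving $w_1(\tau,\cdot)=w_2(\tau,\cdot)$ on $B_{R_0}\setminus\overline{\Omega}$, hence equal Cauchy data on $\partial\Omega$, this exhibits $(w_1(\tau,\cdot),w_2(\tau,\cdot))$ as an \emph{actual} eigenfunction pair for \eqref{d2a} at every $\tau$ in the strip where the pair is nonzero. The theorem then follows from soft complex analysis: by Lemma~\ref{l1} the map $\tau\mapsto w_j(\tau,\cdot)$ is holomorphic with values in $H^1(B_{R_0})$ and its Taylor coefficients at $0$ are (up to constants) the moments $\int_0^{+\infty}t^{2n}u_j\,dt$, which by Proposition~\ref{p1} and $f_j\not\equiv0$ do not all vanish; hence its zero set is discrete, and $NTE(c_1,c_2,\delta)$ is contained in that zero set. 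Your unsymmetrized $w_j$ solves an inhomogeneous equation, so the pair $(w_1,w_2)$ you construct never witnesses a transmission eigenvalue, and no other mechanism for producing eigenfunctions appears in the proposal.

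This missing input is precisely what your analytic-Fredholm fallback would need. The Fredholm alternative for a holomorphic family $\mathcal T(\tau)$ says: either $\mathcal T(\tau)$ is invertible off a discrete set, or it is nowhere invertible. To conclude the theorem you must rule out the first alternative, i.e.\ you must exhibit transmission eigenfunctions on a set of $\tau$ with an accumulation point --- which is exactly the construction above, so the Fredholm machinery adds nothing and the argument as written is circular (``forces $\mathcal T(\tau)$ to be singular along a subset with an accumulation point'' is asserted, not proved). Moreover, as you correctly suspect, setting up $\mathcal T(\tau)$ or $N(\tau)=\Lambda_1(\tau)-\Lambda_2(\tau)$ as an index-zero Fredholm family is itself problematic here: no sign or contrast condition on $c_1-c_2$ near $\partial\Omega$ is assumed (the theorem even allows $c_1=c_2$), and this is the regime where Fredholmness of the interior transmission problem is not available by standard arguments. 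The paper's route avoids all of this: it needs no DtN maps, no resonance-free regions beyond \eqref{d1a}, and no Fredholm theory --- only the symmetrized Laplace transform, Lemma~\ref{l2}, Proposition~\ref{p1}, and the identity theorem for vector-valued holomorphic functions.
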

\begin{proof} For $r>0$, we fix $P_r:=\{z\in\mathbb C:\  \im(z)\in(-r,r)\}$. Following Lemma \ref{l1}, we define 
\bel{t4c}w_j(\tau,\cdot):=\hat{u_j}(i\tau ,\cdot)|_{B_{R_0}}+\hat{u_j}(-i\tau ,\cdot)|_{B_{R_0}},\quad \tau\in P_\delta.\ee
Then, for all $\tau\in P_\delta$, we have $(w_1(\tau,\cdot),w_2(\tau,\cdot))\in H^1(B_{R_0})^2$ and
\eqref{eq2} implies that
$$\mathcal A w_j(\tau,x)-c_j^{-2}(x)\tau^2w_j(\tau,x)=i\tau c_j^{-2}(x)f_j(x)-i\tau c_j^{-2}(x)f_j(x)=0,\quad x\in B_{R_0}.$$
Moreover, in a similar way to Lemma \ref{l2}, we can show that \eqref{t4b} implies
$$w_1(\tau,x)=w_2(\tau,x),\quad x\in B_{R_0}\setminus\overline{\Omega},\ \tau\in P_\delta.$$
Therefore, for all $\tau\in P_\delta$,  $(w_1(\tau,\cdot),w_2(\tau,\cdot))\in H^1(B_{R_0})^2$ solves \eqref{d2a} and in view of Lemma \ref{l1} 
we have $\tau\mapsto(w_1(\tau,\cdot),w_2(\tau,\cdot))\in\mathcal H(P_\delta,H^1(B_{R_0}))$. In addition, applying \eqref{d1a}, \eqref{l1c} and fixing
$$ u_j^{(2n)}=\frac{\int_0^{+\infty}t^{2n}u_j(t,\cdot)|_{B_{R_0}}dt}{(2n)!},\quad n\in\mathbb N\cup\{0\},$$
we deduce that the serie
$$\sum_{n=0}^N u_j^{(2n)}\tau^{2n},\quad N\in\mathbb N,\ j=1,2,$$
converges uniformly with respect to $\tau\in \overline{\mathbb D_{\delta_1}}$, $\delta_1\in(0,\delta)$,  as a map taking values in $H^1(B_{R_0})$ and Lemma \ref{l1} implies that
$$-2\sum_{n=0}^\infty u_j^{(2n)}\tau^{2n}=\hat{u_j}(i\tau ,\cdot)|_{B_{R_0}}+\hat{u_j}(-i\tau ,\cdot)|_{B_{R_0}}=w_j(\tau,\cdot),\quad \tau\in \mathbb D_\delta,\ j=1,2.$$
Then, in view of Proposition \ref{p1}, since $f_j$, $j=1,2$, is non-uniformly vanishing, the set of zero of the map $\tau\mapsto w_j(\tau,\cdot)$ in $\mathbb D_\delta$ are isolated. Thus,  the set of zero of the map $\tau\mapsto w_j(\tau,\cdot)$, $j=1,2$, in $P_{\delta}$ should also be isolated. Moreover,  one can easily check that for all $\tau\in P_{\delta}$ we have
$$w_1(\tau,\cdot)\equiv 0\Longleftrightarrow w_2(\tau,\cdot)\equiv 0.$$
Therefore,  the set of zero of the map $\tau\mapsto (w_1(\tau,\cdot),w_2(\tau,\cdot))$  in $P_{\delta}$ are isolated which implies that $NTE(c_1,c_2,\delta)$ is at most a discrete set with isolated complex numbers.\end{proof}

\begin{rem} In view of Theorem \ref{t4}, the condition \eqref{t4b} implies that the there exists $\delta>0$ such that the set of complex numbers $\tau\in\mathbb C$ with $\im(\tau)\in(-\delta,\delta)$ which are not an eigenvalue of the transmission eigenvalues problem associated
with $c_1,c_2\in L^\infty (\R^3)$ on $\Omega$ are at most discrete and isolated. The next step in the analysis of the TAT and PAT problem will be to extract information about $c_1-c_2$ from such properties of the corresponding transmission eigenvalues.\end{rem}

\section{Appendix}
In this appendix we consider the determination of the initial state when the sound speed coefficient $c$ is known. More precisely, let us consider the IVP
\begin{equation}\label{eq6}\left\{\begin{array}{ll}c^{-2}(x)\partial_t^2u+\mathcal A u=0,\quad &\textrm{in}\ \R_+\times\R^3,\\  u(0,x)=f(x),\quad \partial_tu(0,x)=g(x),&x\in\R^3\end{array}\right.\end{equation}
with $f\in H^1(\R^3)$ and $g\in L^2(\R^3)$ compactly supported and $c\in L^\infty(\R^3)$ taking a fix  constant value outside $\Omega$. Assuming that $c$ is known we determine $(f,g)$. This result can be stated as follows.

\begin{Thm}\label{t2} 
For $j=1,2$, let $f_j\in H^1(\R^3)$, $g_j\in L^2(\R^3)$ be  such that supp$(f_j)\cup$supp$(g_j)\subset\Omega$ and let $c\in L^\infty(\R^3)$ be  constant valued on $\R^3\setminus\overline{\Omega}$ and satisfy \eqref{c}. Let $u_j$ be the solution of \eqref{eq6} with  $f=f_j$, $g=g_j$. Then, the following implication holds true
\begin{equation}\label{t2c}  (u_1= u_2 \textrm{ on }\R_+\times\partial\Omega)\Rightarrow (f_1=f_2 \textrm{ and }g_1=g_2).\end{equation}

\end{Thm}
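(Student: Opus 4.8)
The goal is to show that, with a \emph{known} sound speed $c$, the boundary measurement $u|_{\R_+\times\partial\Omega}$ uniquely determines both the initial pressure $f$ and the initial velocity $g$. By linearity it suffices to set $f:=f_1-f_2$, $g:=g_1-g_2$, $u:=u_1-u_2$, and prove that $u\equiv 0$ on $\R_+\times\partial\Omega$ forces $f\equiv 0$ and $g\equiv 0$. The plan is to exploit the Laplace transform machinery already developed in Section 2, now applied to the full inhomogeneous Cauchy problem \eqref{eq6} where the nonzero velocity $g$ produces a different source term.

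\textbf{First steps.} First I would apply the Laplace transform in time to \eqref{eq6}, obtaining for $\hat u(p,\cdot)$ the elliptic equation
\begin{equation}\label{t2proof1}
\mathcal A \hat u(p,x)+c^{-2}(x)p^2\hat u(p,x)=c^{-2}(x)\bigl(pf(x)+g(x)\bigr),\quad x\in\R^3,\ p\in\mathbb C_+,
\end{equation}
which is the analogue of \eqref{eq2} with source $c^{-2}(pf+g)$. As in Lemma \ref{l2}, the restriction of $u$ to $\R_+\times(\R^3\setminus\overline\Omega)$ solves a homogeneous exterior problem with zero Cauchy data and vanishing boundary values on $\R_+\times\partial\Omega$; uniqueness for this exterior initial-boundary value problem yields $u\equiv 0$ on $\R_+\times(\R^3\setminus\overline\Omega)$, hence $\hat u(p,\cdot)$ vanishes on $\R^3\setminus\overline\Omega$ for every $p\in\mathbb C_+$. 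Thus $\hat u(p,\cdot)\in H^1_0$-type data supported in $\overline\Omega$ while satisfying \eqref{t2proof1} inside $\Omega$.

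\textbf{The core argument.} The key point is that \eqref{t2proof1} together with $\hat u(p,\cdot)|_{\R^3\setminus\overline\Omega}=0$ means that for each fixed $p$ the function $\hat u(p,\cdot)$ is a solution of $(\mathcal A+c^{-2}p^2)\hat u=c^{-2}(pf+g)$ that vanishes, along with its conormal derivative (the latter inherited from the matching of Cauchy data across $\partial\Omega$), on $\partial\Omega$. I would test \eqref{t2proof1} against an arbitrary solution $\phi$ of the \emph{adjoint} equation $\mathcal A\phi+c^{-2}\overline{p}^2\phi=0$ in $\Omega$ (equivalently $\mathcal A\phi+c^{-2}p^2\phi=0$ after conjugation), integrating by parts twice; since both $\hat u$ and $\partial_{\nu_a}\hat u$ vanish on $\partial\Omega$, all boundary terms drop and one obtains the orthogonality relation
\begin{equation}\label{t2proof2}
\int_\Omega c^{-2}(x)\bigl(p\,f(x)+g(x)\bigr)\phi(x)\,dx=0
\end{equation}
for every such $\phi$ and every $p\in\mathbb C_+$. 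Viewing \eqref{t2proof2} as an identity in the variable $p$, the coefficients of $p^1$ and $p^0$ must separately vanish, giving $\int_\Omega c^{-2}f\phi\,dx=0$ and $\int_\Omega c^{-2}g\phi\,dx=0$ for all solutions $\phi$ of $(\mathcal A+c^{-2}p^2)\phi=0$ in $\Omega$. Letting $p\to 0$ (or fixing $p$ and using density of the span of such solutions, for instance by completeness of eigenfunctions of the associated elliptic operator with the appropriate boundary condition) then forces $f\equiv 0$ and $g\equiv 0$ in $\Omega$.

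\textbf{Main obstacle.} The hard part will be justifying the separation of the two source contributions and the density/completeness step: one must argue that the family of test functions $\phi$ solving the $p$-dependent elliptic equation is rich enough to conclude $f=g=0$ from \eqref{t2proof2}, and that the $p$-dependence can be decoupled cleanly. One delicate feature is that $c$ is only $L^\infty$ (non-smooth), so the interior regularity of $\hat u$ and the trace and integration-by-parts arguments must be carried out in the weak $H^1$ sense, with the conormal derivative $\partial_{\nu_a}\hat u$ interpreted distributionally as in Definition \ref{d2}. I expect the cleanest route is to observe that varying $p$ in \eqref{t2proof2} over an interval and using the analyticity of $p\mapsto\hat u(p,\cdot)$ lets one differentiate in $p$, isolating $\int_\Omega c^{-2}f\phi=0$ and $\int_\Omega c^{-2}g\phi=0$; then a Runge-type approximation or the completeness of transmission/Dirichlet solutions supplies enough test functions to kill $f$ and $g$ individually.
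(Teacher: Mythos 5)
Your set-up coincides with the paper's: Laplace transform of the difference $u=u_1-u_2$, exterior uniqueness giving $\hat u(p,\cdot)=0$ on $\R^3\setminus\overline\Omega$ for $\re p>0$, and the resulting orthogonality relation $\int_\Omega c^{-2}(pf+g)\phi\,dx=0$ for every $\phi$ solving $(\mathcal A+c^{-2}p^2)\phi=0$ in $\Omega$. The genuine gap is the step where you try to extract $f=g=0$ from this relation, and none of the remedies you sketch can close it. (a) You cannot ``separate the coefficients of $p^0$ and $p^1$'': the admissible test functions $\phi$ depend on $p$, so the relation is not a polynomial identity in $p$ with fixed coefficients. (b) For a \emph{fixed} $p$, the span of such $\phi$ is \emph{not} dense in $L^2(\Omega)$: its annihilator is precisely the space of non-radiating sources at frequency $p$, which is infinite dimensional --- for any $w\in C_0^\infty(\Omega)$, $w\not\equiv0$, the source $(\mathcal A+c^{-2}p^2)w$ is orthogonal to every such $\phi$. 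Hence at fixed $p$ your identity merely restates what you already knew (that $\hat u(p,\cdot)$ has vanishing Cauchy data on $\partial\Omega$) and can never yield $pf+g=0$. (c) Letting $p\to0$ only gives orthogonality of $c^{-2}g$ to $\mathcal A$-harmonic functions, which is insufficient for the same reason (take $g=\mathcal A w$ with $w\in C_0^\infty(\Omega)$). (d) The ``completeness of eigenfunctions'' route would require using the Dirichlet eigenfunctions of $c^2\mathcal A$ as test functions, but an eigenfunction with eigenvalue $\lambda_k$ solves your test equation only when $p^2=-\lambda_k$, i.e. $p=\pm i\sqrt{\lambda_k}$ on the imaginary axis, where your relation was never established: Theorem \ref{t2} does not assume admissibility or any decay, so $\hat u(p,\cdot)$ is a priori defined and analytic only for $\re p>0$, and extending the identity to the imaginary axis is exactly the hard part that remains unaddressed.

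This continuation is where the paper's proof does its real work, and it is the idea missing from your plan. The paper expands $\hat u(p,\cdot)|_{B_{R_0}}$ (which lies in $H^1_0(B_{R_0})$ because it vanishes on the open annulus $B_{R_0}\setminus\overline\Omega$, not just on $\partial\Omega$) in the orthonormal basis $\{\phi_{k,\ell}\}$ of Dirichlet eigenfunctions of $A=c^2\mathcal A$ on the weighted space $L^2(B_{R_0};c^{-2}dx)$, obtaining the explicit representation
\begin{equation*}
\hat u(p,\cdot)|_{B_{R_0}}=\sum_{k=1}^\infty\frac{\sum_{\ell=1}^{m_k}\left(\left\langle g,\phi_{k,\ell}\right\rangle+p\left\langle f,\phi_{k,\ell}\right\rangle\right)\phi_{k,\ell}}{\lambda_k+p^2},
\end{equation*}
which is rational in $p$ and therefore extends meromorphically to $\C\setminus\{\pm i\sqrt{\lambda_k}\}$. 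By analytic continuation, the vanishing of $\hat u(p,\cdot)$ on the annulus persists on this whole set; computing the residues at $p=\mp i\sqrt{\lambda_k}$ and invoking the linear independence of the restrictions $\phi_{k,\ell}|_{B_{R_0}\setminus\overline\Omega}$, $\ell=1,\dots,m_k$ (a unique continuation fact), one gets $\left\langle g,\phi_{k,\ell}\right\rangle=i\sqrt{\lambda_k}\left\langle f,\phi_{k,\ell}\right\rangle$ and $\left\langle g,\phi_{k,\ell}\right\rangle=-i\sqrt{\lambda_k}\left\langle f,\phi_{k,\ell}\right\rangle$, so all eigen-coefficients of $f$ and $g$ vanish and completeness gives $f=g=0$. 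Some such exploitation of the global $p$-dependence --- an explicit meromorphic (spectral) representation plus a residue argument at the poles $\pm i\sqrt{\lambda_k}$ --- is indispensable, and your proposal stops short of it.
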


Let us observe that when $c$ is lying in $C^1(\R^3)$, one can prove Theorem \ref{t2} by combining the unique continuation of \cite{Ta} with suitable properties of hyperbolic equations (see e.g. \cite{ET,HK}). Nevertheless, it is not clear that such unique continuation results holds true when $c\in L^\infty(\R^3)$ and we are not aware of any result proving \eqref{t2c} for bounded sound speed coefficient $c$. For this reason we give the full proof of Theorem \ref{t2}

\textbf{Proof of Theorem \ref{t2}.} We assume that the condition \eqref{l2b} is fulfilled and we will prove that $f_1=f_2$ and $g_1=g_2$. Let us first consider $u=u_1-u_2$ and notice that $u$ solves \eqref{eq6} with $f=f_1-f_2$ and $g=g_1-g_2$. Combining \eqref{l2b} with the fact that supp$(f_j)\cup$supp$(g_j)\subset\Omega$,  we obtain in a similar way to Lemma \ref{l2} that
\begin{equation}\label{t2d}u(t,x)=0,\quad (t,x)\in\R_+\times (\R^3\setminus\overline{\Omega}).\end{equation}
Combining this with \eqref{t2d}, we deduce that, for all $p\in\mathbb C_+$, $\hat{u}(p,\cdot)\in H^1(\R^3)$ satisfies the following 
conditions 
\begin{equation}\label{t2e}\left\{\begin{array}{ll}-c^2(x)\mathcal A \hat{u}(p,x)+p^2\hat{u}(p,x)=g(x)+pf(x),\quad &\textrm{in}\ x\in\R^3,\\  
\hat{u}(p,x)=0, &x\in \R^3\setminus\overline{\Omega} .\end{array}\right.\end{equation}
Let us consider the operator $A$ acting on $L^2(B_{R_0};c^{-2}dx)$ with domain $D(A)=H^2(B_{R_0})\cap H^1_0(B_{R_0})$ defined  by
\[
A w:=c^2\mathcal A w,\quad w\in D(A).
\]
Recall that here we associate with the weighted space $L^2(B_{R_0};c^{-2}dx)$ the scalar product
$$\left\langle f,g\right\rangle_{L^2(B_{R_0};c^{-2}dx)}=\int_{B_{R_0}}f\overline{g}c^{-2}dx.$$
It is well known that the $A$ is a selfadjoint operator with a compact resolvent. We fix $\{\lambda_k\}_{k\in\mathbb N}$ and $m_k\in\mathbb N$  the strictly increasing and positive sequence of the eigenvalues of $A$ and the algebraic multiplicity of $\lambda_k$, respectively. For each eigenvalue $\lambda_k$, we introduce a family $\{\phi_{k,\ell}\}_{\ell=1}^{m_k}$ of eigenfunctions of $A$
which forms an orthonormal basis in $L^2(B_{R_0};c^{-2}dx)$ of the algebraic eigenspace of $A$ associated with $\lambda_k$. Multiplying \eqref{t2e} by $c^{-2}\phi_{k,\ell}$, $k\in\mathbb N$, $\ell=1,\ldots,m_k$, and integrating by parts on $B_{R_0}$, we obtain
$$(\lambda_k+p^2)\left\langle \hat{u}(p,\cdot),\phi_{k,\ell}\right\rangle_{L^2(B_{R_0};c^{-2}dx)}=\left\langle g,\phi_{k,\ell}\right\rangle_{L^2(B_{R_0};c^{-2}dx)}+ p\left\langle f,\phi_{k,\ell}\right\rangle_{L^2(B_{R_0};c^{-2}dx)}.$$
It follows that, for all $p\in \mathbb C_+$, we have
$$\hat{u}(p,\cdot)|_{B_{R_0}}=\sum_{k=1}^\infty\frac{\sum_{\ell=1}^{m_k}\left(\left\langle g,\phi_{k,\ell}\right\rangle_{L^2(B_{R_0};c^{-2}dx)}+ p\left\langle f,\phi_{k,\ell}\right\rangle_{L^2(B_{R_0};c^{-2}dx)}\right)\phi_{k,\ell}}{\lambda_k+p^2}.$$
Fix $\mathcal U=\mathbb C\setminus\{\pm i\sqrt{\lambda_k}\}$. It is clear that the map $p\mapsto \hat{u}(p,\cdot)|_{B_{R_0}}$ can be extended analytically to an element of $\mathcal H(\mathcal U;L^2(B_{R_0};c^{-2}dx))$. Combining this with \eqref{t2e} and applying the unique continuation property for analytic functions, we deduce that 
\begin{equation}\label{t2ee}\sum_{k=1}^\infty\frac{\sum_{\ell=1}^{m_k}\left(\left\langle g,\phi_{k,\ell}\right\rangle_{L^2(B_{R_0};c^{-2}dx)}+ p\left\langle f,\phi_{k,\ell}\right\rangle_{L^2(B_{R_0};c^{-2}dx)}\right)\phi_{k,\ell}}{\lambda_k+p^2}|_{B_{R_0}\setminus\overline{\Omega}}=0,\quad p\in \mathcal U.\end{equation} Multiplying this expression by $p+i\sqrt{\lambda_k}$ and sending $p\to-i\sqrt{\lambda_k}$, we obtain
$$\frac{\sum_{\ell=1}^{m_k}\left(\left\langle g,\phi_{k,\ell}\right\rangle_{L^2(B_{R_0};c^{-2}dx)}-i\sqrt{\lambda_k}\left\langle f,\phi_{k,\ell}\right\rangle_{L^2(B_{R_0};c^{-2}dx)}\right)\phi_{k,\ell}|_{B_{R_0}\setminus\overline{\Omega}}}{-2i\sqrt{\lambda_k}}=0,\quad k\in\mathbb N.$$
Moreover, as a simple consequence of unique continuation property for elliptic equations, we know that the functions $\phi_{k,\ell}|_{B_{R_0}\setminus\overline{\Omega}}$, $\ell=1,\ldots,m_k$, are linearly independent as element of $L^2(B_{R_0}\setminus\overline{\Omega})$ (see e.g. \cite[Step 4 in the proof of Theorem 1.1]{KSXY}). It follows that 
\begin{equation}\label{t2f}\left\langle g,\phi_{k,\ell}\right\rangle_{L^2(B_{R_0};c^{-2}dx)}=i\sqrt{\lambda_k}\left\langle f,\phi_{k,\ell}\right\rangle_{L^2(B_{R_0};c^{-2}dx)},\quad k\in\mathbb N,\ \ell=1,\ldots,m_k.\end{equation}
In the same way multiplying \eqref{t2e} by $p-i\sqrt{\lambda_k}$ and sending $p\to i\sqrt{\lambda_k}$, we obtain
$$\frac{\sum_{\ell=1}^{m_k}\left(\left\langle g,\phi_{k,\ell}\right\rangle_{L^2(B_{R_0};c^{-2}dx)}+i\sqrt{\lambda_k}\left\langle f,\phi_{k,\ell}\right\rangle_{L^2(B_{R_0};c^{-2}dx)}\right)\phi_{k,\ell}|_{B_{R_0}\setminus\overline{\Omega}}}{2i\sqrt{\lambda_k}}=0,$$
which implies that
$$\left\langle g,\phi_{k,\ell}\right\rangle_{L^2(B_{R_0};c^{-2}dx)}=-i\sqrt{\lambda_k}\left\langle f,\phi_{k,\ell}\right\rangle_{L^2(B_{R_0};c^{-2}dx)},\quad k\in\mathbb N,\ \ell=1,\ldots,m_k.$$
Combining this with \eqref{t2f} we obtain
$$\left\langle g,\phi_{k,\ell}\right\rangle_{L^2(B_{R_0};c^{-2}dx)}=0,\quad k\in\mathbb N,\ \ell=1,\ldots,m_k$$
and we obtain  $g\equiv0$. Then, from \eqref{t2f} we deduce that $f\equiv0$. This proves that \eqref{t2c} holds true and it completes the proof of Theorem \ref{t2}.\qed

\section*{Acknowledgements}

 The research of G.U. is partly supported by NSF and a Robert R, Phelps and Elaine F. Phelps Professorhip at University of Washington.

\end{document}